\documentclass[11pt]{article}

\usepackage[margin=1in]{geometry}

\usepackage[T1]{fontenc}
\usepackage[utf8]{inputenc}

\usepackage{amsmath,amssymb,amsfonts,amsthm,mathtools}
\RequirePackage[numbers,sort&compress]{natbib}
\usepackage{graphicx}
\usepackage{booktabs}
\usepackage{multirow}
\usepackage{algorithm}
\usepackage{algorithmic}
\usepackage{enumitem}
\usepackage{macro}
\usepackage{bbm}
\usepackage{multirow}
\usepackage{booktabs}
\usepackage{mathtools}
\usepackage{commath}
\usepackage{algorithm}
\usepackage{algorithmic}

\newcommand{\qa}{\hat{q}}
\newcommand{\qt}{\tilde{q}}
\newcommand{\qto}{\tilde{q}_l}
\newcommand{\qtt}{\tilde{q}_u}
\newcommand{\tqto}{\tilde{q}_u}
\newcommand{\tqtt}{\tilde{q}_l}
\newcommand{\Do}{\mathcal{D}_1}
\newcommand{\Dt}{\mathcal{D}_2}
\newcommand{\hf}{\tilde{F}}
\newcommand{\tf}{\hat{F}}
\newcommand{\fe}{F_\epsilon}
\newcommand{\fs}{F_S}
\newcommand{\fme}{F_{|\epsilon|}}
\newcommand{\hg}{\hat{\gamma}_1}
\newcommand{\G}{\mathcal{G}}
\newcommand{\La}{\mathcal{\hat{L}}}
\newcommand{\Lt}{\mathcal{\tilde{L}}}
\newcommand{\R}{\mathbb{R}}

\usepackage[numbers,sort&compress]{natbib}

\usepackage[
    colorlinks=true,
    citecolor=blue,
    linkcolor=blue,
    urlcolor=blue
]{hyperref}

\newtheorem{theorem}{Theorem}[section]

\theoremstyle{definition}
\newtheorem{definition}[theorem]{Definition}

\newtheorem{remark}[theorem]{Remark}

\title{\bfseries On Optimal Data Splitting for Split Conformal Prediction}

\author{
Sayan Das\thanks{Department of Statistics and Data Science, Washington University in St. Louis, St. Louis, MO 63130, USA}
\and
Bahram Yaghooti\thanks{Department of Electrical and Systems Engineering, Washington University in St. Louis, St. Louis, MO 63130, USA}
\and
Todd A. Kuffner$^*$
\and
Soumendra N. Lahiri$^*$
}

\date{}

\begin{document}

\maketitle

%%%%%%%%%%%%%%%%%%%%%%%%%%%%%%%%%%%%%%%%%%%%%%%%%%%%%%%%%%%%%%%%%%%%%%%%%%%%%%%
% Abstract
%%%%%%%%%%%%%%%%%%%%%%%%%%%%%%%%%%%%%%%%%%%%%%%%%%%%%%%%%%%%%%%%%%%%%%%%%%%%%%%
\vspace{0.4cm}

\begin{abstract}
Conformal prediction and its variants, including the split conformal prediction, provide a distribution-free framework for uncertainty quantification by constructing prediction intervals or sets with finite-sample coverage guarantees. The statistical efficiency of these intervals depends critically on how the data are split into training and calibration samples. Despite its practical importance, a principled characterization of the training-calibration split that minimizes prediction interval length while maintaining coverage has remained largely unresolved. In this paper, we develop a theoretical framework for optimal data splitting in split conformal prediction. We first analyze the problem in a general setting and derive analytical characterizations of the length-optimal split ratio under both symmetric and asymmetric regimes. We then show how the general results specialize to several commonly used regression settings, including linear regression, nonparametric regression, and neural networks, thereby demonstrating the scope of the framework. We also describe a data-based method for selecting the optimal proportion. Our analysis clarifies how model-related features govern the optimal allocation of samples between training and calibration and provides principled guidance for constructing shorter prediction intervals. Experiments on both synthetic and real-world datasets demonstrate the applicability of the proposed methodology across a variety of practical scenarios.
\end{abstract}

\vspace{0.5em}

\noindent
\textbf{Keywords:}
Conformal prediction;
optimal sample allocation;
asymptotic optimality;
distribution-free inference.

\vspace{0.3em}

\noindent
\textbf{MSC2020:}
Primary 62G15, 62G20;
Secondary 62G08.

%%%%%%%%%%%%%%%%%%%%%%%%%%%%%%%%%%%%%%%%%%%%%%%%%%%%%%%%%%%%%%%%%%%%%%%%%%%%%%%
\section{Introduction}\label{section:introduction}
Conformal prediction is a distribution-free framework for uncertainty quantification that constructs prediction intervals or sets with finite-sample marginal coverage guarantees, assuming only data exchangeability~\cite{vovk2005algorithmic}. Its key advantage lies in providing rigorous, model-agnostic calibration through a post-hoc mechanism that converts point predictions into statistically valid confidence intervals or sets, without modifying the underlying learning algorithm~\cite{angelopoulos2023conformal}. Classical conformal prediction, also known as full conformal prediction, guarantees exact coverage but requires retraining the model for each possible output, limiting its practical applicability in large-scale settings due to substantial computational complexity.

To address these computational challenges, several variants have emerged. Among these, inductive conformal prediction, also known as split conformal prediction, stands out for its practicality: it partitions data into (proper) training- and calibration-subsets and achieves valid coverage with a single model fit~\cite{papadopoulos2002inductive}. Other extensions, such as cross-conformal prediction~\cite{vovk2015cross} and Jackknife+~\cite{barber2021predictive}, enhance the stability and sharpness of prediction intervals through resampling techniques. Additionally, online conformal prediction extends the framework to sequential or streaming settings, enabling real-time uncertainty quantification~\cite{gibbs2024conformal,angelopoulos2024online}.

Beyond computational efficiency, recent research has substantially expanded the applicability of conformal prediction. Adaptive techniques~\cite{zaffran2022adaptive}, blockwise calibration methods~\cite{xu2023conformal}, and covariate-adjusted conformal inference~\cite{tibshirani2019conformal} have been developed to address challenges such as non-exchangeable data and distributional shift. Further efforts focus on achieving stronger forms of validity, including class-conditional coverage guarantees~\cite{ding2023class}. In addition, 
~\cite{dhillon2024expected} 
examines the relationship between calibration set size and predictive accuracy under computational constraints, underscoring the importance of principled data allocation strategies.

Despite split conformal prediction's theoretical guarantees and widespread adoption across a broad range of application domains, including medical diagnostics~\cite{sreenivasan2025conformal}, natural language processing~\cite{campos2024conformal}, computer vision~\cite{melki2023group,angelopoulos2021uncertainty}, finance~\cite{pasche2025extreme}, medical imaging~\cite{lu2022fair}, robotics and autonomous vehicles~\cite{luo2024sample}, reinforcement learning~\cite{gan2025conformal}, and large language models~\cite{cherian2024large}, determining optimal split ratio and its impact on prediction interval length remains a major challenge. Prior work has analyzed the expected size of conformal prediction sets in both asymptotic and finite-sample regimes~\cite{lei2013distribution,sadinle2019least,vovk2016criteria,lei2014distribution,dhillon2024expected}. However, none of these studies provide procedures for deriving or achieving optimal-length prediction intervals, leaving open the question of how to systematically determine the split ratio that yields the smallest valid prediction interval.
% in practice.

Another growing line of work has begun to address this gap by explicitly developing methods for prediction interval length optimization. Several approaches focus on designing improved conformity scores that yield sharper and more informative sets under various forms of conditional validity~\cite{lei2018distribution,romano2019conformalized,deutschmann2024adaptive,feldman2021improving,romano2020classification,xie2024boosted}. Another direction leverages insights from level-set estimation, drawing on foundations from classical statistics and their adaptation to conformal prediction in~\cite{lei2011efficient,sadinle2019least}, to construct covariate-dependent thresholds that directly target shorter prediction sets while accommodating refined coverage notions. Complementing these developments, conformal training methods~\cite{bai2022efficient,cherian2024large,stutz2022learning} optimize the conformity score itself to reduce prediction-set size, whereas alternative formulations keep the score fixed and instead optimize adaptive, covariate-informed thresholds. More recently, a principled minimax framework for constructing optimal length prediction sets under various conditional coverage requirements has been introduced, further emphasizing the growing focus on prediction-set efficiency~\cite{kiyani2024length}.

Collectively, these advances mark an important shift toward methods that not only ensure proper coverage but also aim to make conformal prediction sets as compact and informative as possible. Nevertheless, split conformal prediction continues to face a critical limitation: its performance is highly sensitive to how the data are partitioned between training and calibration sets. In practice, most implementations rely on fixed or randomly chosen split ratios without principled justification, which can lead to suboptimal performance, especially in data-limited or imbalanced settings. This challenge introduces a fundamental trade-off: Allocating more data to training improves predictive accuracy, whereas increasing the size of the calibration set enhances the reliability of the uncertainty quantification. Yet, despite this inherent tension, no principled framework currently exists for selecting the optimal split ratio, leaving a significant gap in the practical deployment of split conformal prediction.

In this paper, we address the length-optimal data splitting problem in split conformal prediction by introducing a theoretical framework for selecting the optimal training-calibration sample size ratio. Our contributions are as follows:
\begin{itemize}
\item We derive analytical expressions that characterize the optimal training-calibration split ratio required to jointly optimize the prediction interval length while preserving nominal coverage accuracy. 
These results are obtained under general distributional assumptions, including both symmetric and asymmetric settings, and offer insights into the dependence of the optimal split on key distributional parameters.
\item We instantiate these theoretical findings within three representative classes of regression models, namely linear regression, nonparametric regression, and neural network-based regression, thereby illustrating the broad applicability and model-agnostic nature of the proposed framework. We further analyze the theoretical and empirical implications of our results in each case.
\item 
We also provide a general  data based method for  determining 
the optimal training-calibration  split ratio that is agnostic to the 
learning algorithm. Applied to the 
three specific classes of problems mentioned above, the proposed method provides practically useful and accurate 
recommendations of the split ratio.
\item We conduct extensive empirical evaluations on both synthetic and real-world datasets, highlighting the impact of principled data splitting on prediction interval length across diverse regression models. In the absence of prior work on optimal split selection, our experimental results underscore the importance of model-aware splitting strategies, demonstrating clear advantages over fixed split ratios while preserving finite-sample coverage guarantees.
\end{itemize}
Collectively, these results provide the first principled solution to a fundamental challenge in split conformal prediction, enhancing both the efficiency and reliability of uncertainty quantification in practical applications.

The remainder of the paper is organized as follows.
Section~\ref{section:background} reviews split conformal prediction and its coverage guarantees.
Section~\ref{section:main-results} presents our optimal splitting framework and theoretical results for various regression models.
Section~\ref{section:experimetns} reports empirical validation using synthetic and real-world datasets.
We conclude in Section~\ref{section:discussion} with key findings and directions for future work.

\section{Background on split conformal prediction}\label{section:background}
Conformal prediction provides a general, distribution-free framework for constructing predictive sets with finite-sample coverage guarantees~\cite{vovk2005algorithmic,shafer2008tutorial}. Let $(X_i,Y_i)_{i=1}^{n+1}$ be a random sample 
%i.i.d.\ observations 
from an unknown distribution $\mathbf{P}_{XY},$ where $X_i\in\mathcal{X}$ and $Y_i\in\mathcal{Y}$. Based on the first $n$ observations, the objective is to construct a measurable set $\mathcal{C}(X_{n+1})\subseteq\mathcal{Y}$ such that, for a prespecified miscoverage level $\alpha \in (0,1),$
\[
    \Pr\{Y_{n+1}\in \mathcal{C}(X_{n+1})\} \ge 1-\alpha,
\]
without imposing parametric assumptions on $\mathbf{P}_{XY}$. % beyond exchangeability.

In this paper, we focus on the regression setting, where $Y\in\mathbb{R}$ and
\[
    Y = f(X) + \epsilon,\qquad X\sim \mathbf{P}_X,\qquad 
    \epsilon \overset{\mathrm{iid}}{\sim} F_\epsilon,
\]
so that $\mathcal{C}(X_{n+1})$ is an interval.

\subsection{Split conformal prediction} Split conformal prediction divides the observed sample $(X_i,Y_i)_{i=1}^n$ into a \emph{proper training set}, say $\Do$, of size $n_1$ and a \emph{calibration set}, say $\Dt$, of size $n_2=n-n_1$. A regression estimator $\hat f_1 \equiv \hat f_{n_1}$ is fitted only on the proper training data $\Do$, while the calibration sample is used to compute conformity scores. The sample-splitting step ensures statistical independence between the calibration residuals, which is essential for achieving a finite-sample coverage guarantee~\cite{papadopoulos2002inductive,lei2018distribution}. The choice of $(n_1,n_2)$ determines the trade-off between estimator accuracy (improved by a larger $n_1$) and coverage accuracy (improved by a larger $n_2$). In later sections, we analyze this trade-off through expansions of the expected length of the prediction interval and quantify its potential impact on the coverage error, which must be balanced for optimal performance of the split conformal 
prediction method. 

\subsection{Symmetric conformal prediction}
In the standard regression setting, conformity scores are taken as absolute residuals~\cite{angelopoulos2023conformal}:
\[
    \hat\epsilon_i = |Y_i - \hat f_1(X_i)|, \qquad i \in \Dt.
\]
Let $\hat{q} \equiv \hat{q}_{n_2,1-\alpha}$ denote the 
%empirical $\lceil (n_2+1)(1-\alpha)\rceil$-quantile 
$\lceil (n_2+1)(1-\alpha)\rceil$-th
order statistic of $\{\hat\epsilon_i : i\in \Dt\}$ 
where, for $x\in \bbr$, 
$\lceil x\rceil$ (and $\lfloor x \rfloor$)
denote(s) the smallest integer not less than $x$  
(and the largest integer not exceeding $x$, respectively).  
The symmetric conformal prediction interval is then defined as 
\[
    \hat{\mathcal{C}}(X_{n+1})
    = \big[\hat f_1(X_{n+1}) - \hat{q},\;
           \hat f_1(X_{n+1}) + \hat{q} \big].
\]
Exchangeability yields the finite-sample guarantee
\[
 1-\alpha
 \leq \Pr\Big(Y_{n+1}\in \hat{\mathcal{C}}(X_{n+1})\Big) \leq
 1-\alpha + \frac{1}{n_2+1}.
\]
In symmetric conformal prediction, the interval length is
\[
    \La= 2 \hat{q}
\]
and its asymptotic behavior depends on different factors like the distribution of $\epsilon,$ and the choice of $(n_1,n_2)$, which we 
will investigate in the later sections.

%%%%%%%%%%%%%
\subsection{Asymmetric conformal prediction}
Symmetric intervals may be inefficient for skewed error distributions. Asymmetric split conformal prediction allows distinct upper and lower miscoverage levels~\cite{cordier2023flexible}. 
%[change to Linusson 2014 signed error]. 
Let $\alpha_L,\alpha_U\ge 0$ satisfy $\alpha_L+\alpha_U=\alpha \in (0,1)$.  
Define the signed conformity scores
\[
    \tilde\epsilon_i = Y_i - \hat f_1(X_i) , \qquad i \in \Dt.
\]
Let $\qt_l \equiv \tilde{q}_{n_2,\alpha_L}$ and $\qt_u \equiv \tilde{q}_{n_2,1-\alpha_U}$ denote the $\lfloor (n_2+1)\alpha_L\rfloor$-quantile of $\{\tilde\epsilon_i : i\in \Dt\}$ and the $\lceil (n_2+1)(1-\alpha_U)\rceil$-quantile of $\{\tilde\epsilon_i : i\in \Dt\}$ respectively.  
Then the asymmetric interval is
\[
    \tilde{\mathcal{C}}(X_{n+1})
    = \big[\hat f_1(X_{n+1}) + \qt_l,\;
           \hat f_1(X_{n+1}) + \qt_u \big].
\]
Exchangeability again ensures that
\[
  1-\alpha \leq   \Pr\Big(Y_{n+1}\in \tilde{\mathcal{C}}(X_{n+1})\Big) \leq  1-\alpha + \frac{2}{n_2+1} ,
\]
and the interval length is
\[
    \Lt
    = \qt_u - \qt_l.
\]
Later we will see that in many cases, the use of asymmetric intervals can be advantageous over the symmetric ones.

{%\it Revisit! 
Under the model $Y = f(X) + \epsilon,$ with $X \sim \mathbf{P}_X$ and $\epsilon\overset{\mathrm{iid}}{\sim} F_\epsilon,$ both symmetric and asymmetric split conformal intervals provide exact finite-sample coverage bounds, which primarily depend on the calibration sample size $n_2$ through the general upper bound $\frac{k}{n_2+1}$(with $k=1$ for the symmetric case and $k=2$ for the asymmetric case). On the other hand,  the length of the conformal interval is a random quantity based on both the training set size $n_1$ (through the estimator  $\hat{f}_{1}$) as well as the calibration set size $n_2$ (through the sample quantiles based on conformal scores). As a result, once the optimal split for the interval length is determined, the coverage accuracy/error admits a very definite nonasymptotic bound without further work. 
%we propse to determine the optimal 
%split size for the overall 
%best performance of the split conformal prediction  interval by balancing its 
%\emph{expected length} and its 
%\emph{variance}.
%of the random length 
%of the conformal prediction  interval. 
Therefore, in our analysis, we mainly focus on determining 
the optimal lengths of the prediction intervals
by  explicitly characterizing the asymptotic behaviors of 
%conformal intervals through 
their \emph{expected length} and the \emph{variance}. 
%with both quantities depending on the accuracy of the point predictor $\hat %f_1,$, the calibration quantiles, and the underlying noise distribution. 
The relative contributions of these terms determine how the sample should be split into the training and calibration sets. By developing suitable expansions of $\E(\La)$ (or $\E(\Lt)$) and $\Var(\La)$ (or $\Var(\Lt)$) for general $(n_1,n_2)$, in the next section we derive the optimal split 
proportion that minimizes the interval length while preserving valid coverage. 
%These results are developed in detail in the subsequent sections.
}

\section{Main results}\label{section:main-results}
%Optimal Data Splitting for Split Conformal Prediction}\label{section:optimal-spliting}
%%In this section, we present our main theoretical results.
%on optimal sample allocation in split conformal prediction. 
We begin by introducing some notation that will be used throughout the paper, followed by a description of the assumptions required for our analysis.  
Section 3.3 presents results on the  expectation and variance 
of the lengths of symmetric and asymmetric conformal prediction intervals. In Section 3.4, we specialize the results to three regression settings, namely linear regression, nonparametric regression, and neural network regression, 
and derive specific optimal split proportions in each case. Finally, in Section 
3.5, we describe a data-driven procedure for selecting the optimal split based on a subsampling technique.

\subsection{Notation}
For a  function $g$ on the real line,  we denote its first and second derivatives by $g'$ and $g''$, respectively. The inverse of the function $g$ is denoted by $g^{-1}$. Throughout the paper, $C_1, C_2,\dots$ denote constants that are independent of the sample size $n$. For two positive sequences $a_n$ and $b_n,$ we write $a_n=O(b_n)$ if there exists a constant $C$ such that $a_n \leq Cb_n$
for all $n$. Similarly, $a_n = o(b_n)$ indicates that $a_n/b_n \to 0,$ as $n\to\infty$. Furthermore, by $a_n \asymp b_n$,  we denote that $a_n=O(b_n)$ and $b_n=O(a_n),$ simultaneously. Recall that the floor and ceiling functions, denoted by $\lfloor x \rfloor$ and $\lceil x \rceil,$ represent the greatest integer less than or equal to $x,$ and the smallest integer greater than or equal to $x,$ respectively.
% For any function $g,$ denote $g',g'',\dots$ or $g^{(1)}, g^{(2)},\dots$ as the first derivative, second derivative, and so on. We denote $g^{-1}$ as the inverse function of the function $g$. By $C_1, C_2,\dots$ we denote constants independent of the sample size $n$. For two positive sequences $a_n$ and $b_n,$ $a_n=O(b_n)$ means there exist a constant $C$ such that $a_n \leq Cb_n$ and $a_n = o(b_n)$ means $a_n/b_n \to 0,$ as $n\to \infty.$ Furthermore, by $a_n \asymp b_n$ we denote that $a_n=O(b_n)$ and $b_n=O(a_n)$ simultaneously. We denote $\lfloor x \rfloor$ and $\lceil x \rceil$ as the greatest integer not greater than $x$ and smallest integer not smaller than $x,$ respectively. 

%In the regression case, 
For an estimator $\hat{f}_n$ of $f$ based on sample size $n$, define the estimation error at a point $x$ by $\hat{\gamma}_n(x) = \hat{f}_n(x) - f(x),$ which measures the discrepancy between the estimator $\hat{f}_n(x)$ and the true regression function $f(x)$. For notational conveniences, we simply write the powers  $(\hat{\gamma}_n(x))^k$ as $\hat{\gamma}_n^k(x)$ for any $k\geq2$. We denote by $\E_1,$ $\Var_1,$ and $\Cov_1$ the conditional expectation, variance, and covariance, respectively, given the proper training set $\Do$. For example, the $L_2$ error can be written as
\[ \E\int |\hat{f}_n(x) - f(x)|^2 \mathbf{P}_X(dx)=\E\E_1(\hat{\ga}_n^2(X)).\]
Similarly, the conditional variance is written as
\[ \E\Var_1(\hat{\ga}_n(X)) = \E\E_1(\hat{\ga}_n^2(X)) - \E\{\E_1(\hat{\ga}_n(X))\}^2.\]
Furthermore, for notational brevity, we will write $\hat{\ga}_{n_1} = \hat{f}_{n_1} - f$ as $\hg$.

% We denote $\E_1,$ $\Var_1,$ and $\Cov_1$ as the expectation, variance, and covariance, respectively, conditional on the proper training set $\Do.$ For example, we will write the $L_2$ error as
% \[ \E\int |\hat{f}_n(x) - f(x)|^2 \mathbf{P}_X(dx)=\E\E_1(\hat{\ga}_n^2(X)).\]

% \[ \E\Var_1(\hat{\ga}_n(X)) = \E\E_1(\hat{\ga}_n^2(X)) - \E\{\E_1(\hat{\ga}_n(X))\}^2\]

\subsection{Conditions}
For the symmetric split conformal prediction interval, define
\[m_\alpha=\left\lceil (n_2+1)(1-\alpha)\right\rceil,\qquad
\mu_\alpha=\frac{m_\alpha}{n_2+1},\qquad
\sigma_\alpha^2=\frac{\mu_\alpha(1-\mu_\alpha)}{n_2+2}.\]

\noindent For the asymmetric split conformal prediction interval, define
\[m_1=\left\lfloor (n_2+1)\alpha_L\right\rfloor,\qquad
m_2=\left\lceil (n_2+1)(1-\alpha_U)\right\rceil,\]
and
\[\mu_i=\frac{m_i}{n_2+1},\qquad
\sigma_i^2=\frac{\mu_i(1-\mu_i)}{n_2+2},
\qquad i=1,2,\]
together with
\[\rho_{12}=\frac{\mu_1(1-\mu_2)}{n_2+2}.\]

Let $S$ denote the population conformity score, with distribution function $F_S$. Throughout the paper,
\[S=
\begin{cases}
|\epsilon|, & \text{for symmetric split conformal prediction},\\
\epsilon, & \text{for asymmetric split conformal prediction}.
\end{cases}\]
Furthermore, let $\delta\in(0,1]$ be a constant independent of $n$. We make the following assumptions.
\begin{enumerate}[label={(A.\arabic*)}]
    \item\label{A3} For some integer $K\geq 1,$ suppose there exists a positive, strictly increasing sequence of constants $\{\beta_k\}_{k=1}^K,$ independent of $n,$ such that 
    \[\E\int|\hat{f}_{n}(x) - f(x)|^k \;\mathbf{P}_X(dx) \asymp n^{-\beta_k}.\]
    \item \hspace{-0.2cm}$_r$\label{A1} Suppose there exists two sequences defined by $a_{1n} = \delta^{-1}n^{-\gamma},$ for some $\gamma\in(0,(\beta_k-\beta_r)/k),k>r$ and $a_{2n} = \delta^{-1}\sqrt{\log(n)/n},$ such that for each relevant quantile level $\mu$,
    \[\delta<\inf_{|y|\leq a_{1n},|x|\leq a_{2n}} F_S'\Big(\fs^{-1} (\mu+x)+y\Big) \leq \sup_{|y|\leq a_{1n},|x|\leq a_{2n}} F_S'\Big(\fs^{-1} (\mu+x)+y \Big) <\delta^{-1}, \] 
    \[\sup_{|y|\leq a_{1n},|x|\leq a_{2n}} \Big|F_S''\Big(\fs^{-1}(\mu+x)+y\Big)\Big|<\delta^{-1},\]
    % Furthermore, let $a_{1n} = \delta^{-1}n^{-\beta_l},$ for some $l<(\beta_k-\beta_2)/k,\;k\geq4$ and $a_{2n} = \delta^{-1}\sqrt{\log(n)/n},$ then
    and
    \[ \sup_{|y|\leq a_{1n},|x|\leq a_{2n}} \Big|F_S''\Big(\fs^{-1}(\mu+x)+y\Big)-F_S''\Big(\fs^{-1}(\mu)\Big) \Big| = o(1), \text{ as } n\to\infty. \]
    For Theorem~\ref{thm:1}, corresponding to the symmetric interval, this condition is imposed with $F_S=F_{|\epsilon|}$ and $\mu=\mu_\alpha$. For Theorem~\ref{thm:2}, corresponding to the asymmetric interval, it is imposed with $F_S=F_\epsilon$ and $\mu\in\{\mu_1,\mu_2\}$.
    \item\label{A2} There exists a constant $\beta_\epsilon>0,$ independent of $n,$ such that for all $x\geq \delta^{-1},$ 
    \[ F_\epsilon(-x) + (1-F_\epsilon(x)) \leq \delta^{-1} |x|^{-\beta_\epsilon}.\] 
    %\[|F_\epsilon^{-1}(u)| \leq \delta^{-1}\Big\{u^{-\beta_\epsilon}+ (1-u)^{-\beta_\epsilon}\Big\}.\]
\end{enumerate}

Assumption \ref{A3} imposes polynomial moment bounds on the estimation error of the regression estimator $\hat f_n$. Specifically, it requires that for some integer $K \ge 1,$ the $L_k$-risk $\E\!\int |\hat f_n(x)-f(x)|^k\,\mathbf{P}_X(dx)$ decays at the rate $n^{-\beta_k}$ for $k=1,\dots,K$. Such conditions are standard in nonparametric regression and statistical learning theory, and are satisfied by a broad range of estimators, including kernel smoothers, local polynomial estimators, and certain neural network regressors; see, for example, \cite{gyorfi2002distribution, fan2018local}. In the context of conformal inference, moment bounds of this form ensure an accurate approximation of the residual distribution, and have appeared in recent theoretical analyses of predictive inference~\cite{lei2018distribution}.
% Assumption \ref{A3} imposes polynomial moment bounds on the estimation error of the regression estimator $\hat f_n$. Specifically, it requires that for some integer $K \ge 1,$ the $L_k$-risk $\E\!\int |\hat f_n(x)-f(x)|^k\,P_X(dx)$ decays at a rate $n^{-\beta_k}$ for $k=1,\dots,K$. Such conditions are standard in nonparametric regression and learning theory and hold for a wide range of estimators, including kernel smoothers, local polynomial estimators, and certain neural network regressors; see, for example, Györfi et al.\ (2002) and Fan and Gijbels (1996). In the context of conformal inference, moment bounds of this type ensure that the estimated residual distribution is well approximated and appear in recent theoretical studies of predictive inference (Lei et al., 2018; Barber et al., 2021).

Assumption \ref{A1}$_r$ imposes smoothness and local stability conditions on the noise distribution function $F_S$ (where $S$ is either $|\epsilon|$ or $\epsilon$). Specifically, it requires that the first and second derivatives of $F_S$ remain uniformly bounded away from zero and infinity within a shrinking neighborhood. These conditions ensure that the quantile function does not become excessively flat or steep. Such conditions are classical in asymptotic quantile theory and in deriving Bahadur-type expansions~\cite{serfling2009approximation}. The final requirement in Assumption~\ref{A1}$_r,$ which controls the difference between second derivatives evaluated at nearby points, is essentially a local Lipschitz condition on $F_S''$. Such derivative-boundedness and local smoothness conditions are standard in the analysis of quantile estimators and residual-based inference; see, for example, \cite{van2000asymptotic}, where smoothness of the noise density plays a key role in obtaining uniform expansions of sample quantiles.
% Assumption \ref{A1}$_r$ imposes smoothness and local stability conditions on the noise distribution function $F_\epsilon$. Specifically, it requires that the first and second derivatives of $F_\epsilon$ remain uniformly bounded away from zero and infinity within a shrinking neighborhood. These conditions ensure that the quantile function does not become excessively flat or steep. Such conditions are classical in asymptotic quantile theory and in deriving Bahadur-type expansions (Serfling, 1980; van der Vaart, 1998). The final requirement in Assumption~\ref{A1}$_r,$ which controls the difference between second derivatives evaluated at nearby points, is essentially a local Lipschitz condition on $\fe''$. Such derivative-boundedness and local smoothness conditions are standard in the analysis of quantile estimators and residual-based inference; see, for example, Bhattacharya and Ranga Rao (1976, Chapter 8) and Bickel (1967), where smoothness of the density is required for uniform expansions of sample quantiles.

Assumption \ref{A2} imposes a polynomial tail condition on the noise distribution $F_\epsilon$. This assumption allows for moderately heavy-tailed noise and is substantially weaker than sub-Gaussian or sub-exponential assumptions. Polynomial tail bounds of this form ensure that the contribution of extreme residuals is asymptotically negligible, thereby facilitating higher-order asymptotic approximations of quantile-based statistics. Similar tail conditions are common in the asymptotic theory of order statistics; see, for example, \cite{bickel1967some} and \cite{stigler1969}. In the present setting, Assumption~\ref{A2} is crucial for guaranteeing that contributions from extreme residuals are asymptotically negligible in the expansion of the conformal quantile, so that the approximation error is dominated by the smooth interior behavior described in Assumption~\ref{A1}$_r$.
% Assumption \ref{A2} imposes a polynomial tail condition on the noise distribution $F_\epsilon$. This assumption allows for moderately heavy-tailed noise and is substantially weaker than sub-Gaussian or sub-exponential requirements. Polynomial tail bounds of this form guarantee the existence of suitable moments of the residuals and ensure that empirical quantile estimators possess stable variance and negligible higher-order approximation errors; see, for example, Petrov (1975) and Hall and Welch (1985), who analyze quantile behavior under polynomial-decay tails. In the present setting, Assumption~\ref{A2} is needed to ensure that contributions from extreme residuals are asymptotically negligible in the expansion of the conformal quantile, so that the approximation error is dominated by the smooth interior behavior captured in Assumption~\ref{A1}$_r$.

Taken together, assumptions \ref{A3}--\ref{A2} define a flexible and realistic framework under which the regression estimator converges at a polynomial rate, the residual quantile function is locally smooth, and the noise distribution exhibits only moderate polynomial tail decay. These conditions are mild and broadly consistent with regularity assumptions commonly used in nonparametric regression and quantile inference. They ensure that the analytic approximation of the conformal prediction interval length holds uniformly and that higher-order remainder terms remain negligible.

\subsection{Main results}
In this subsection, we present the main theoretical results of the paper. Under Assumptions \ref{A3}--\ref{A2}, we derive expressions for the expectation and variance of the length of the split conformal prediction interval and use these results to characterize the optimal data-splitting strategy. 

The following theorem provides the mean and variance of the length of the symmetric split conformal prediction interval.

\begin{theorem}\label{thm:1}
Let $\beta_\epsilon > 2/n_2$ and $\alpha\in \Big[(\beta_\epsilon+1)/(\beta_\epsilon(n_2+1)),1-1/(\beta_\epsilon(n_2+1))\Big)$.
\begin{itemize} 
     \item[(a)] Suppose Assumptions \ref{A3}, \ref{A1}$_1$ and \ref{A2} hold, then we have,
    \beqs
\begin{split}
    \E\big(\La\big) = 2\fme^{-1}(\mu_\alpha) +  2H_{|\epsilon|}\Big(\fme^{-1}(\mu_\alpha)\Big) \E\E_1(\hg(X)) + \sigma_\alpha^2 {\fme^{-1}}''(\mu_\alpha) + o\bigg( \frac{1}{n_1^{\beta_1}}+\frac{1}{n_2}\bigg).
\end{split}
\eeqs    
Furthermore, under Assumption \ref{A1}$_2$,
\beqs
\begin{split}
    \Var\big(\La\big) =  4\Big\{ H_{|\epsilon|}\Big(\fme^{-1}(\mu_\alpha)\Big) \Big\}^2 \Var\E_1(\hg(X)) + 4\sigma_\alpha^2 \Big{\{\fme^{-1}}'(\mu_\alpha)\Big\}^2 + o\bigg(\frac{1}{n_1^{\beta_2}}+\frac{1}{n_2}\bigg),
\end{split}
\eeqs
where $H_{|\epsilon|}(a) = -\{\fe'(a) - \fe'(-a)\}/\fme'(a).$

\item[(b)] If the distribution of $\epsilon$ is symmetric and Assumption \ref{A1}$_2$ holds, then
    \beqs
\begin{split}
    \E\big(\La\big) = 2\fme^{-1}(\mu_\alpha) +  2G_{\epsilon}\Big(\fme^{-1}(\mu_\alpha)\Big) \E\E_1(\hg^2(X)) + \sigma_\alpha^2 {\fme^{-1}}''(\mu_\alpha) + o\bigg( \frac{1}{n_1^{\beta_2}}+\frac{1}{n_2}\bigg).
    \end{split}
\eeqs    
Furthermore, under Assumption \ref{A1}$_4$,
    \beqs
\begin{split}
    \Var\big(\La\big) =  4\Big\{ G_{\epsilon}\Big(\fme^{-1}(\mu_\alpha)\Big) \Big\}^2 \Var\E_1(\hg^2(X)) + 4\sigma_\alpha^2 \Big{\{\fme^{-1}}'(\mu_\alpha)\Big\}^2 + o\bigg(\frac{1}{n_1^{\beta_4}}+\frac{1}{n_2}\bigg),
\end{split}
\eeqs
where $G_{\epsilon}(a) = -\fe''(a)/(2\fe'(a)).$

%     \item[(a)]Under the assumptions \ref{A3}, \ref{A1}$_2$ and \ref{A2}, we have,
%     \beqs
% \begin{split}
%     \E\big(\La\big) = 2\fme^{-1}(\mu_\alpha) +  2H_{|\epsilon|}\Big(\fme^{-1}(\mu_\alpha)\Big) \E\E_1(\hg(X)) + \sigma_\alpha^2 {\fme^{-1}}''(\mu_\alpha) + o\bigg( \frac{1}{n_1^{\beta_1}}+\frac{1}{n_2}\bigg), \\
%     \Var\big(\La\big) =  4\Big\{ H_{|\epsilon|}\Big(\fme^{-1}(\mu_\alpha)\Big) \Big\}^2 \Var\E_1(\hg(X)) + 4\sigma_\alpha^2 \Big{\{\fme^{-1}}'(\mu_\alpha)\Big\}^2 + o\bigg(\frac{1}{n_1^{\beta_2}}+\frac{1}{n_2}\bigg),
% \end{split}
% \eeqs
% where $H_{|\epsilon|}(a) = -\{\fe'(a) - \fe'(-a)\}/\fme'(a).$

% \item[(b)] Suppose the distribution of $\epsilon$ is absolutely symmetric. Then under the assumptions \ref{A3}, \ref{A1}$_2$ and \ref{A2}, we have,
%     \beqs
% \begin{split}
%     \E\big(\La\big) = 2\fme^{-1}(\mu_\alpha) +  2G_{\epsilon}\Big(\fme^{-1}(\mu_\alpha)\Big) \E\E_1(\hg^2(X)) + \sigma_\alpha^2 {\fme^{-1}}''(\mu_\alpha) + o\bigg( \frac{1}{n_1^{\beta_2}}+\frac{1}{n_2}\bigg), \\
%     \Var\big(\La\big) =  4\Big\{ G_{\epsilon}\Big(\fme^{-1}(\mu_\alpha)\Big) \Big\}^2 \Var\E_1(\hg^2(X)) + 4\sigma_\alpha^2 \Big{\{\fme^{-1}}'(\mu_\alpha)\Big\}^2 + o\bigg(\frac{1}{n_1^{\beta_4}}+\frac{1}{n_2}\bigg),
% \end{split}
% \eeqs
% where $G_{\epsilon}(a) = -\fe''(a)/(2\fe'(a)).$
\end{itemize}
\end{theorem}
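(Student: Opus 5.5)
We prove Theorem~\ref{thm:1} by conditioning on the proper training set $\Do$. Given $\Do$, the calibration scores $\hat\epsilon_i=|\epsilon_i-\hg(X_i)|$, $i\in\Dt$, are i.i.d.\ with conditional distribution function
\[
\hat F_1(t)=\E_1\big\{F_\epsilon(t+\hg(X))-F_\epsilon(-t+\hg(X))\big\},\qquad t\ge 0.
\]
Hence $\qa$ is the $m_\alpha$-th order statistic of an i.i.d.\ sample from $\hat F_1$. Write $\qa=\hat F_1^{-1}(U_{(m_\alpha)})$, where $U_{(m_\alpha)}$ is the $m_\alpha$-th order statistic of $n_2$ independent uniforms. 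We have $\E U_{(m_\alpha)}=\mu_\alpha$ and $\Var U_{(m_\alpha)}=\sigma_\alpha^2$, and $U_{(m_\alpha)}$ is independent of $\Do$. The proof then has two layers:
\begin{itemize}
\item a perturbation expansion of the quantile map $\hat F_1^{-1}$ around $\fme^{-1}$, driven by $\hg$;
\item a delta-method expansion in $U_{(m_\alpha)}$ around $\mu_\alpha$, driven by $n_2$.
\end{itemize}

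\textbf{Step 1: expansion of $\hat F_1$.} On the event that $|\hg(X)|\le a_{1n_1}$, Taylor expand $F_\epsilon(\pm t+\hg)$ to second order:
\[
\hat F_1(t)=\fme(t)+\{\fe'(t)-\fe'(-t)\}\,\E_1\hg(X)+\tfrac12\{\fe''(t)-\fe''(-t)\}\,\E_1\hg^2(X)+R_1(t).
\]
On the complementary event we use Markov's inequality with the $k$-th moment from \ref{A3}. This gives a contribution of order $n_1^{-\beta_k}a_{1n}^{-k}=o(n_1^{-\beta_r})$, which is exactly what the constraint $\gamma<(\beta_k-\beta_r)/k$ in \ref{A1}$_r$ is designed to deliver.

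Next invert by the implicit function theorem, using the lower bound $\fme'>\delta$ from \ref{A1}$_r$:
\[
\hat F_1^{-1}(u)=\fme^{-1}(u)-\frac{\hat F_1(\fme^{-1}(u))-u}{\fme'(\fme^{-1}(u))}+\cdots
\]
This yields the first-order term $H_{|\epsilon|}(\fme^{-1}(u))\,\E_1\hg(X)$. In case (b), symmetry of $\epsilon$ gives $\fe'(t)=\fe'(-t)$, so the linear term vanishes. The quadratic term then becomes
\[
\tfrac12\{\fe''(t)-\fe''(-t)\}=\fe''(t),\qquad \fme'(t)=2\fe'(t),
\]
which produces $G_\epsilon(\fme^{-1}(u))\,\E_1\hg^2(X)$. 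This explains why part (b) needs $r=2$ for the mean and $r=4$ for the variance: the square of $\E_1\hg^2$ involves fourth moments.

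\textbf{Step 2: expansion in $U_{(m_\alpha)}$.} Taylor expand $\hat F_1^{-1}(U_{(m_\alpha)})$ around $\mu_\alpha$ to second order. The mean picks up $\tfrac12\sigma_\alpha^2\,{\hat F_1^{-1}}{}''(\mu_\alpha)$, and ${\hat F_1^{-1}}{}''$ may be replaced by ${\fme^{-1}}''$ up to $o(1)$ using the local Lipschitz-type condition on $F_S''$ in \ref{A1}$_r$. The variance picks up $\sigma_\alpha^2\{{\fme^{-1}}'(\mu_\alpha)\}^2$. Because $U_{(m_\alpha)}$ is independent of $\Do$, the law of total variance splits $\Var(\La)$ into two pieces:
\begin{itemize}
\item $\Var\big(\E[\La\mid\Do]\big)$, which produces the term $4H^2\,\Var\E_1\hg(X)$ (respectively $4G^2\,\Var\E_1\hg^2(X)$);
\item $\E\Var(\La\mid\Do)$, which produces $4\sigma_\alpha^2\{{\fme^{-1}}'\}^2$.
\end{itemize}
Cross terms are of smaller order.

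\textbf{Main obstacle: the remainders.} The delicate step is making the Taylor remainder in $U_{(m_\alpha)}$ uniformly $o(1/n_2)$. \ref{A1}$_r$ only controls derivatives for $|U_{(m_\alpha)}-\mu_\alpha|\le a_{2n}=\delta^{-1}\sqrt{\log n/n}$.
\begin{itemize}
\item \emph{Inside this window}, we bound the third-order remainder using the uniform derivative bounds together with $\E|U_{(m_\alpha)}-\mu_\alpha|^3=O(n_2^{-3/2})$.
\item \emph{Outside this window}, beta-distribution tail bounds give probability $O(n_2^{-c})$ for any $c$. The contribution of $\qa$ there is controlled through the polynomial tail \ref{A2}: $\E|\hat F_1^{-1}(U)|^p\,\mathbbm{1}\{\cdot\}$ is finite precisely when the order statistic stays away from the extremes, i.e.\ when $m_\alpha$ lies in the range allowed by $\beta_\epsilon>2/n_2$ and the stated interval for $\alpha$.
\end{itemize}
We would first handle this with H\"older's inequality, combining an integrability estimate for the $m_\alpha$-th order statistic of a $\beta_\epsilon$-tailed law (the Stigler/Bickel-type moment conditions) with the small-probability bound. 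This is the step most likely to require care.
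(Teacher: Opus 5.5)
Your proposal is correct and follows the paper's proof essentially step for step. Like the paper, you condition on $\Do$ and write $\qa=\hat F_1^{-1}(U_{(m_\alpha)})$. You then Taylor-expand the conditional score CDF in $\hg$ on the window $a_{1n_1}$, with Markov and \ref{A3} handling the complement. Inverting gives the $H_{|\epsilon|}$ correction, and under symmetry the $G_\epsilon$ correction from the second-order term. Finally you expand in $U_{(m_\alpha)}$ on the window $a_{2n_2}$, with a Beta tail bound and \ref{A2} off it, and combine by the laws of total expectation and variance.

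One small adjustment concerns the inside-window remainder. Assumption \ref{A1}$_r$ gives no third-derivative or Lipschitz control, only an $o(1)$ modulus for $F_S''$, so you cannot bound that remainder via $\mathbb{E}|U_{(m_\alpha)}-\mu_\alpha|^3$. Instead bound it, as the paper does, by $\tfrac12(U_{(m_\alpha)}-\mu_\alpha)^2\sup_{|x|\le a_{2n_2}}|(\hat F_1^{-1})''(\mu_\alpha+x)-(\hat F_1^{-1})''(\mu_\alpha)|=o(\sigma_\alpha^2)$. Relatedly, the Beta tail bound $2\exp(-2n_2a_{2n_2}^2)$ gives only a fixed polynomial rate, not $O(n_2^{-c})$ for every $c$; combined with H\"older it still yields $o(1/n_2)$.
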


\begin{remark}
The leading term
$2F^{-1}_{|\epsilon|}(\mu_\alpha)$
in the expansion of $\E(\hat L)$ corresponds to the expected length obtained when the
regression function is known exactly. Since
\[
\mu_\alpha=\frac{\lceil(n_2+1)(1-\alpha)\rceil}{n_2+1}
\ge 1-\alpha,
\]
and $F^{-1}_{|\epsilon|}$ is an increasing function, this quantity is always at least as
large as the oracle interval length
$
2\fme^{-1}(1-\alpha).$
Thus, even in the absence of estimation error, the split conformal prediction interval
exhibits a positive finite-sample bias arising from the calibration quantile. The remaining
terms in Theorem~\ref{thm:1} quantify the additional increase in interval length due to
estimating the regression function. Consequently, minimizing these higher-order terms
yields the shortest attainable split conformal prediction interval while preserving the
finite-sample coverage guarantee.
\end{remark}

\begin{remark}
In the case of a symmetric noise distribution, the leading term $2F^{-1}_{|\epsilon|}(\mu_\alpha)$ depends only on the calibration sample size through the empirical quantile level \(\mu_\alpha\) and is independent of the proper training sample size. Since its dependence on \(n_2\) contributes only to the constant of the calibration error of order \(n_2^{-1}\), it does not affect the asymptotically optimal order of the training--calibration split. Consequently, the optimal allocation of samples is determined by balancing the regression estimation error,
\[
\E\E_1(\hat\gamma_1^2(X))
\quad\text{and}\quad
\Var\E_1(\hat\gamma_1^2(X)),
\]
which decrease as the proper training sample size \(n_1\) increases, against the overall calibration error of order $n_2^{-1},$ which decreases as the calibration sample size \(n_2\) increases.

For example, suppose
\[
\E\E_1(\hat\gamma_1^2(X))
\asymp
n_1^{-\beta},
\qquad
\Var\E_1(\hat\gamma_1^2(X))
\asymp
n_1^{-2\beta},
\]
for some \(\beta>0\). Then a bias-optimal split satisfies
\[
n_1^{-\beta}\asymp n_2^{-1},
\]
whereas an MSE-optimal split satisfies
\[
n_1^{-2\beta}\asymp n_2^{-1}.
\]
The corresponding values of \(\beta\) for several commonly used regression models are derived in Section~\ref{subsec: examples}.
\end{remark}

\begin{remark}
The quantities $\E\E_1(\hat\gamma_1(X))$ and
$\Var\E_1(\hat\gamma_1(X))$ appearing in
Theorem~\ref{thm:1}(a) may converge faster than the upper bounds
$n_1^{-\beta_1}$ and $n_1^{-\beta_2}$ implied by Assumption~\ref{A3}. Thus,
the stated rates should be interpreted as worst-case controls rather than exact
orders. Furthermore, the remainder term can be made arbitrarily small by imposing
higher-order smoothness assumptions on the error distribution. In particular, if
Assumption~\ref{A1}$_r$ holds with $r=n_0$ for some positive integer $n_0$, then
the remainder becomes
\(
o(n_1^{-\beta_{n_0}}+n_2^{-1}),
\)
showing that increasingly accurate asymptotic expansions can be obtained by assuming
additional differentiability of the error distribution.
\end{remark}

The next theorem gives the mean and variance of the length of the asymmetric split conformal prediction interval.

\begin{theorem}\label{thm:2}
Suppose $\beta_\epsilon\geq 2/(n_2-1),$ $\alpha_L \in ((\beta_\epsilon+1)/(\beta_\epsilon(n+1)), 1/2]$, and $\alpha_U \in [(\beta_\epsilon+1)/(\beta_\epsilon(n+1)),1/2),$ with $\alpha_L+\alpha_U=\alpha$. Then, under Assumptions \ref{A3}, \ref{A1}$_2$ and \ref{A2}, we have
    \beqs
\begin{split}
    \E\big(\Lt\big)& = \Big\{\fe^{-1}(\mu_{2}) - \fe^{-1}(\mu_{1})\Big\} \\&\hspace{1cm} + \Big\{ G_\epsilon\Big(\fe^{-1}(\mu_{2})\Big) - G_\epsilon\Big(\fe^{-1}(\mu_{1})\Big)\Big\} \E\Var_1\Big(\hg(X)\Big) \\& \hspace{2cm} + \Big\{\frac{\sigma_2^2}{2}\;{\fe^{-1}}''(\mu_{2}) - \frac{\sigma_1^2}{2}\;{\fe^{-1}}''(\mu_{1}) \Big\} + o\bigg(\frac{1}{n_1^{\beta_2}}+\frac{1}{n_2}\bigg).    
\end{split}
\eeqs
Furthermore, if Assumption \ref{A1}$_4$ holds, then
    \beqs
\begin{split}
    \Var\big(\Lt\big) & = \Big\{G_\epsilon\Big(\fe^{-1}(\mu_{2})\Big) - G_\epsilon\Big(\fe^{-1}(\mu_{1})\Big) \Big\}^2 \Var\Var_1(\hg(X)) \\&\hspace{1cm} + \sigma_{2}^2\;\Big\{{\fe^{-1}}'(\mu_{2})\Big\}^2 + \sigma_{1}^2\;\Big\{{\fe^{-1}}'(\mu_{1})\Big\}^2 \\&\hspace{2cm} - 2\rho_{12}\; {\fe^{-1}}'(\mu_{2}) {\fe^{-1}}'(\mu_{1}) + o\bigg(\frac{1}{n_1^{\beta_4}}+\frac{1}{n_2}\bigg),
\end{split}
\eeqs
where $G_\epsilon(a) = -\fe''(a)/(2\fe'(a)).$
\end{theorem}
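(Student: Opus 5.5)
Conditioning on the proper training set $\Do$ makes $\hat f_1$ fixed, so the calibration scores $\tilde\epsilon_i = \epsilon_i - \hg(X_i)$, $i\in\Dt$, are i.i.d.\ with conditional distribution function
\[
\tf_1(t)=\E_1\fe\big(t+\hg(X)\big).
\]
By the probability integral transform, $\qt_l$ and $\qt_u$ have the representations $\tf_1^{-1}(U_{(m_1)})$ and $\tf_1^{-1}(U_{(m_2)})$. Here $U_{(m_1)}$ and $U_{(m_2)}$ are uniform order statistics. They are independent of $\Do$, with means $\mu_i$, variances $\sigma_i^2$ and covariance $\rho_{12}$. The plan is a two-layer expansion:
\begin{itemize}
\item an expansion of the quantile function $\tf_1^{-1}$ around $\fe^{-1}$ in powers of $\hg$;
\item a Taylor (delta-method) expansion of $\tf_1^{-1}(U_{(m_i)})$ around $\mu_i$.
\end{itemize}
The result is then assembled with the tower property $\E=\E\E_1$ and the law of total variance.

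\emph{Step 1 (perturbed quantile).} Taylor-expand $\fe(t+\hg(X))$ to second order. This gives
\[
\tf_1(t)=\fe(t)+\fe'(t)\E_1\hg(X)+\tfrac12\fe''(t)\E_1\hg^2(X)+R.
\]
The remainder $R$ is controlled by the local Lipschitz part of \ref{A1}$_2$ on $\{|\hg|\le a_{1n}\}$ and by Markov's inequality with \ref{A3} off that set; the condition $\gamma<(\beta_k-\beta_2)/k$ is exactly what makes the off-set contribution $o(n_1^{-\beta_2})$. Inverting via the implicit function theorem at level $\mu$, with $a=\fe^{-1}(\mu)$, gives
\[
\tf_1^{-1}(\mu)=a-\E_1\hg(X)+G_\epsilon(a)\Var_1(\hg(X))+o_P(\cdot).
\]
The key observation is that the first-order shift $-\E_1\hg$ is the same at every level. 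It therefore cancels in the difference $\qt_u-\qt_l$. After this cancellation, the second-order terms combine: $\fe''(a)(\E_1\hg)^2/(2\fe'(a))$ from the inversion and $-\fe''(a)\E_1\hg^2/(2\fe'(a))$ from the expansion together give $G_\epsilon(a)\Var_1\hg$. Taking $\E$ produces the $\E\Var_1(\hg(X))$ term in the statement.

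\emph{Step 2 (order-statistic layer).} Expand
\[
\tf_1^{-1}(U_{(m_i)})=\tf_1^{-1}(\mu_i)+(\tf_1^{-1})'(\mu_i)(U_{(m_i)}-\mu_i)+\tfrac12(\tf_1^{-1})''(\mu_i)(U_{(m_i)}-\mu_i)^2+\dots
\]
Taking $\E_1$ adds $\tfrac{\sigma_i^2}{2}(\tf_1^{-1})''(\mu_i)$. By Step 1 and the smoothness in \ref{A1}, this equals $\tfrac{\sigma_i^2}{2}{\fe^{-1}}''(\mu_i)+o(n_2^{-1})$. Taking the difference of the two levels and then $\E$ gives the expectation formula.

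\emph{Step 3 (variance).} Use
\[
\Var(\Lt)=\Var\E_1(\Lt)+\E\Var_1(\Lt).
\]
By Step 1, $\E_1\Lt$ equals a deterministic quantity plus $\{G_\epsilon(\fe^{-1}(\mu_2))-G_\epsilon(\fe^{-1}(\mu_1))\}\Var_1\hg$. Its variance is then the stated $\Var\Var_1$ term. The cross terms and remainders need fourth moments of $\hg$, which is why \ref{A1}$_4$ and $\beta_4$ appear. The term $\Var_1\Lt$ is the delta-method variance of
\[
(\tf_1^{-1})'(\mu_2)U_{(m_2)}-(\tf_1^{-1})'(\mu_1)U_{(m_1)},
\]
which yields the $\sigma_i^2$ and $-2\rho_{12}$ terms up to $o(n_2^{-1})$.

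\emph{Main obstacle: uniform control of remainders.} The Taylor expansions in Step 2 are valid only when $|U_{(m_i)}-\mu_i|\le a_{2n}$ and $|\hg|\le a_{1n}$. I would split on the event
\[
\{|U_{(m_i)}-\mu_i|\le a_{2n}\}.
\]
\begin{itemize}
\item On the event, the bounds in \ref{A1} apply directly.
\item The complement has probability $O(n_2^{-c})$ for any fixed $c$, by beta/binomial tail bounds; the factor $\sqrt{\log n}$ in $a_{2n}$ makes this work.
\item The contribution of extreme order statistics is bounded by combining the polynomial tail \ref{A2} with the explicit beta density of $U_{(m_i)}$. The conditions on $\beta_\epsilon$ and the ranges of $\alpha_L,\alpha_U$ are exactly those ensuring $\E|\fe^{-1}(U_{(m_i)})|^2<\infty$: near $0$, $|\fe^{-1}(u)|\lesssim u^{-1/\beta_\epsilon}$, which is integrable against $u^{m_1-1}$ when $m_1>2/\beta_\epsilon$.
\end{itemize}
Verifying that these truncation errors are $o(n_1^{-\beta_2}+n_2^{-1})$ (respectively with $\beta_4$ for the variance) is the most delicate step. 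I would do it first in the case $\hg\equiv0$, and then perturb.
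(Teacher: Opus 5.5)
Your proposal is correct and follows essentially the same route as the paper. You condition on $\Do$ and represent $\qt_l,\qt_u$ through the uniform order statistics $U_{(m_1)},U_{(m_2)}$; you Taylor-expand the conditional quantile function about $\mu_i$ on $\{|U_{(m_i)}-\mu_i|\le a_{2n}\}$; and you expand and invert the conditional CDF $\E_1\fe(\cdot+\hg(X))$ to obtain the shift $-\E_1(\hg(X))+G_\epsilon(\fe^{-1}(\mu_i))\Var_1(\hg(X))$, with truncation remainders handled by Markov's inequality, beta tail bounds and the polynomial tail condition. The one difference is that you apply the law of total variance directly to $\Lt$, so the level-independent shift $-\E_1(\hg(X))$ cancels before any variance is taken; the paper instead expands $\Var(\qt_u)$, $\Var(\qt_l)$ and $\Cov(\qt_l,\qt_u)$ separately and sees the $\Var\E_1(\hg(X))$ and $\Cov(\E_1(\hg(X)),\Var_1(\hg(X)))$ terms cancel only at the end, which is a mild streamlining that spares you from controlling those larger terms to precision $o(n_1^{-\beta_4})$.
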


\begin{remark}
Theorem~\ref{thm:2} shows that the approximation error for the expected length of the split conformal prediction interval can be substantially smaller for asymmetric intervals than for the classical symmetric interval. The key distinction is that the leading term in Theorem~\ref{thm:2} depends on
\(
\E\Var_1(\hat\gamma_1(X)),
\)
whereas the corresponding term in Theorem~\ref{thm:1}(b) depends on
\(
\E\E_1(\hat\gamma_1^2(X)).
\)
Since
\[
\Var_1(\hat\gamma_1(X))
=
\E_1(\hat\gamma_1^2(X))
-
\{\E_1(\hat\gamma_1(X))\}^2,
\]
we always have
\[
\E\Var_1(\hat\gamma_1(X))
\le
\E\E_1(\hat\gamma_1^2(X)),
\]
with strict inequality whenever the conditional bias of the regression estimator is nonzero. Consequently, the leading approximation error for asymmetric conformal prediction may be of strictly smaller order than that of the symmetric interval. Moreover, unlike the symmetric case, this improvement does not require the noise distribution to be symmetric, making asymmetric conformal prediction theoretically advantageous under considerably weaker assumptions. Similar conclusions apply to the variance expansion.
\end{remark}

\subsection{Examples} \label{subsec: examples}
In this subsection, we derive the expressions that appeared in the main theoretical results, which enable practical determination of the optimal split ratio between the proper training and calibration samples. For illustration, we focus on a symmetric conformal prediction interval with a symmetric error distribution. We consider three regression settings: linear regression, nonparametric regression via the Nadaraya-Watson kernel estimator, and fully connected neural network regression. These examples illustrate how the general theory applies across different regression paradigms. Since $\sigma_\alpha^2 \asymp n_2^{-1}$ always holds, it suffices to characterize $\E\E_1(\hg^2(X))$ and $\Var\E_1(\hg^2(X)),$ equivalently the expectation and variance of the $L_2$ errors, as functions of the (proper) training size $n_1$ for each model.

\subsubsection{Linear regression}\label{subsec:linear_regression_theory}
We begin by considering a random effects model with the linear regression function $f(x) = x^\top\beta,$ where $x\in\R^d$ and $\beta\in\R^d$ are the covariates and regression coefficients, respectively. For illustrative purposes, let $\{(X_i,Y_i)\}_{i=1}^{n_1}$ denote a collection of $n_1$ independent and identically distributed samples. We consider a Gaussian design with $X_i\overset{iid}{\sim} \mathcal{N}(0, I_d),$ where $I_d$ is the $d$-dimensional identity matrix, and errors $\epsilon_i \overset{iid}{\sim} \mathcal{N}(0,1)$. Let $X$ denote the design matrix and $Y$ the response vector, then the ordinary least squares linear regression estimate of the regression function is given by $\hat{f}(x) = x^\top\hat{\beta},$ where $\hat{\beta} = (X^\top X)^{-1}X^\top Y$ and we have
% First, we consider the random effects model with the linear regression function $f(x) = x^\top\beta,\;x_i\in\R^d,$ where $\beta\in\R^d$ are the regression coefficients. For illustrative purposes, given $n_1$ iid samples $\{(X_i,Y_i)\}_{i=1}^{n_1},$ we consider a Gaussian design where $X_i\overset{iid}{\sim} \mathcal{N}(0, I_p),$ where $I_p$ is the $d$-dimensional identity matrix and $\epsilon_i \overset{iid}{\sim} \mathcal{N}(0,1)$. Let $X$ denote the design matrix and $Y$ denote the response vector, then the ordinary least squares linear regression estimate $\hat{f}(x) = x^\top\hat{\beta},$ where $\hat{\beta} = (X^\top X)^{-1}X^\top Y$ and we have
\[ \E\E_1(\hg^2(X)) = \frac{d}{n_1-d-1} \asymp \frac{1}{n_1},\]
and
\[ \Var\E_1(\hg^2(X)) = 2\E\Big[\text{tr}(X^\top X)^{-2}\Big] + \Var\Big[\text{tr}(X^\top X)^{-1}\Big] \asymp \frac{1}{n_1^2}, \]
provided $n_1 > d+3$. 
% Then, in the ordinary least squares linear regression, the sizes of the training set and the calibration set should satisfy $n_1\asymp n_2$ for a bias-optimal split. That is, the size of the training set should be approximately equal to the size of the calibration set. For an MSE-optimal split, $n_1\asymp \sqrt{n_2}$ should satisfy.
Then, in ordinary least squares linear regression, a bias-optimal split requires $n_1 \asymp n_2$, that is, the training and calibration sample sizes are of the same order. For an MSE-optimal split, one requires $n_1 \asymp \sqrt{n_2}$.

% Then, in the ordinary least squares linear regression, following the main results, we should use $n_1\asymp n_2,$ i.e., the size of the proper training set needs to be almost equal in size to the calibration set.

\subsubsection{Nonparametric regression}
We consider a nonparametric regression framework in which the goal is to estimate the regression function $f(x)=\E[Y \mid X=x] $ from $n_1$ iid samples $\{(X_i, Y_i)\}_{i=1}^{n_1}$. We assume that the covariates $X_i\in[-a, a]^d,$ for some $a>0$ are uniformly distributed, and the noise terms $\{\epsilon_i\}_{i=1}^{n_1}$ are iid with $\epsilon_i\overset{iid}{\sim}\mathcal{N}(0,1)$. For estimation, we employ the Nadaraya-Watson kernel estimator, a local-constant regression method that smooths the data using a bandwidth parameter $h>0$ and a kernel function $\mathcal{K}:\R^d \to \R_+$ that satisfies $\int\mathcal{K}(u)\,du=1$. The estimator at a point $x\in\R^d$ is defined as 
\[ \hat{f}_{h,n_1}(x)  = \frac{\sum_{i=1}^{n_1} \mathcal{K}\Big(\frac{x - X_i}{h}\Big) Y_i} {\sum_{i=1}^{n_1} \mathcal{K}\Big(\frac{x - X_i}{h}\Big)}, \]
which can be viewed as a locally weighted average of the observed responses $Y_i,$ with larger weights assigned to observations where the covariates $X_i$ are closer to $x$. For illustrative purposes, we assume that the kernel $\mathcal{K}$ is bounded, symmetric, and continuously differentiable with compact support. Finally, if we choose the bandwidth $h\asymp n_1^{-1/(4+d)},$ standard calculation shows
\[ \E\E_1(\hg^2(X)) \asymp \frac{1}{n_1^{4/(4+d)}},\]
and
\[ \Var\E_1(\hg^2(X)) \asymp \frac{1}{n_1^{8/(4+d)}}, \]
provided the regression function has bounded first and second derivatives. Then, in the Nadaraya-Watson kernel regression, the theoretically justified choice for the bias-optimal data splitting is $n_1\asymp n_2^{1+d/4},$ implying that the training sample should be considerably larger than the calibration sample and $n_1\asymp n_2^{1/2+d/8}$ for the MSE-optimal choice.

\subsubsection{Neural network regression}\label{section:nn_regression}
For the final example, we consider a fully connected neural network with the Rectified Linear Unit (ReLU) activation function. In this setting, exact expressions for the expectation and variance of the $L_2$-error are not yet available in the literature. However, bounds on the expected $L_2$-error have been derived in \cite{kohler21}, under certain structural assumptions on the regression function. To state these results, we rely on the following two definitions.

\begin{definition}[{$(p,C)$-smoothness}, \cite{kohler21}]
Let $p = q + s$ for some $q \in \mathbb{N}_0$ and $0 < s \le 1$.  
A function $m : \mathbb{R}^d \to \mathbb{R}$ is called $(p,C)$-smooth if, for every 
multi-index $\alpha = (\alpha_1,\dots,\alpha_d) \in \mathbb{N}_0^d$ with 
$\sum_{j=1}^d \alpha_j = q,$ the partial derivative
\[
\dfrac{\partial^q m}{\partial x_1^{\alpha_1}\cdots \partial x_d^{\alpha_d}}
\]
exists and satisfies
\[
\biggl|
\frac{\partial^q m}{\partial x_1^{\alpha_1}\cdots \partial x_d^{\alpha_d}}(x) - 
\frac{\partial^q m}{\partial x_1^{\alpha_1}\cdots \partial x_d^{\alpha_d}}(z)
\biggr|
\le C \|x - z\|^s,
\]
for all $x,z \in \mathbb{R}^d,$ where $\|\cdot\|$ denotes the Euclidean norm.
\end{definition}

With the definition of $(p,C)$-smoothness, we define the class of hierarchical composition models as follows.

\begin{definition}[Hierarchical composition models, \cite{kohler21}]
For $l = 1$ and smoothness constraint $\mathcal{P} \subseteq (0,\infty) \times \mathbb{N},$ the space of hierarchical composition models is defined as
\begin{align*}
    \mathcal{H}(1, \mathcal{P})  := &
\bigg\{
h : \mathbb{R}^d \to \mathbb{R} : 
h(a) = m \big( a_{\pi(1)}, \ldots, a_{\pi(K)} \big),
\ \text{where } m : \mathbb{R}^K \to \mathbb{R} \text{ is }\\&\;\; (p, C)\text{-smooth for some } (p,K) \in \mathcal{P}
\text{ and } \pi : \{1, \ldots, K\} \to \{1, \ldots, d\}
\bigg\}.
\end{align*}
For $l > 1,$ we recursively define
\begin{align*}
\mathcal{H}(l, \mathcal{P}) := &
\bigg\{
h : \mathbb{R}^d \to \mathbb{R} : 
h(x) = m \big( f_1(a), \ldots, f_K(a) \big),
\ \text{where } m : \mathbb{R}^K \to \mathbb{R} \text{ is } \\&\;\; (p, C)\text{-smooth for some } (p,K) \in \mathcal{P}
\text{ and } f_i \in \mathcal{H}(l-1, \mathcal{P})
\bigg\}.
\end{align*}
\end{definition}
Let the corresponding regression function $f$ belong to the class $\mathcal{H}(l,p)$ for some $l\in\mathbb{N}$ and $\mathcal{P}\subseteq (0,\infty)\times\mathbb{N}$. Then according to Theorem~1 in \cite{kohler21}, and under some regularity conditions, we have
\[\E\E_1(\hg^2(X)) = O\Big( (\log(n))^6  \max_{(p,K)\in \mathcal{P}} n^{-\frac{2p}{2p+K}} \Big).\]
Although the exact order is not available, for a bias-optimal split, we can use the approximate relation $n_1 \asymp n_2^{1+\bar{K}/2\bar{p}},$ where $(\bar{p},\bar{K})\in \mathcal{P}$ is chosen such that $(\bar{p},\bar{K})=\arg\min_{(p,K)\in \mathcal{P}} p/K$. This implies that the relative size of the proper training set to the calibration set depends on the smoothness and order constraint $\mathcal{P}$.

\subsection{Data based optimal choice of training ratio}
When the sample size $n$ is very large, it may be difficult to determine the optimal split, as doing so requires repeatedly training the model under different splits, which may be computationally infeasible in practice. However, using our results, one can employ a data-driven procedure to select the optimal split via subsampling (cf. \cite{hall1995blocking}).

Based on the preceding results, we obtain that the optimal split satisfies $n_1 \asymp n_2^b$ for some known constant $b>0$. Let $\hat{c}_\ell$ denote the empirical estimate of the ratio of the training sample size to the total sample size, computed from a subsample of size $\ell (<n)$. The corresponding estimate $\hat{c}_n$ for the full sample size $n$ is then determined as the solution to the equation
% From the above results, we have $n_1 \asymp n_2^\ell,$ for some known $\ell>0.$ Suppose $\hat{c}_m$ be the empirical choice of the ratio of the training sample to the total sample based on $m (<n)$ samples. Then $\hat{c}_n$ would be the solution to the equation
\begin{equation} \label{eq:ratio}
    x = \frac{n^{b-1}\hat{c}_\ell}{\ell^{b-1}(1-\hat{c}_\ell)^b} (1-x)^b.
\end{equation}

The following algorithm provides a way to approximate the ratio $\hat{c}_{\ell}$ where $\ell<n$.

\bigskip
\noindent\textbf{Algorithm Description.}
Given $\alpha \in [1/(n_2+1),1),$ and $\ell(<n),$ we choose $\hat{c}_\ell$ as follows:
\begin{enumerate}
    \item Divide the dataset randomly into $K$ disjoint subsets $\{\mathcal{Q}_{k}\}_{k=1}^K,$ each containing $\ell$ samples, such that $K\ell\leq n$. %\red{[(possibly discarding the remaining $n - K\ell$ samples--it is okay if we do not mention)]}
   % \item For each $k=1,\dots,K,$ further randomly partition the subset $\mathcal{Q}_{m,k}$ into a training subset $\mathcal{Q}_{m,k}^{(1)}$ of size $m_1$ and a test subset $\mathcal{Q}_{m,k}^{(2)}$ of size $m_2=m-m_1$.
    \item Choose a grid $\G_g = \{ p_1, \ldots, p_g \} \in (0,1)^g$ of size $g$ (e.g., $\G_6 = \{0.4, 0.5, \ldots, 0.9\}$).
    \item For each $k = 1, 2, \ldots, K$:
    \begin{enumerate}
         \item For each candidate proportion $p_i \in \mathcal{G}_g,$ $i=1,\dots,g,$ further split the subsets $\mathcal{Q}_{k}$ into a proper training set $\mathcal{T}_{k,i}$ of size $\lfloor \ell p_i \rfloor$ and a calibration set $\mathcal{C}_{k,i}$ of size $\ell - \lfloor \ell p_i \rfloor$.
        %Subsequently, compute the empirical coverage probability $\widehat{\mathcal{P}}(\mathcal{Q}_{m,k}^{(2)}, p_i)$ on the same test subset $\mathcal{Q}_{m,k}^{(2)}$.
        \item  Using $\mathcal{T}_{k,i}$ and $\mathcal{C}_{k,i},$ compute prediction interval length $\hat{\mathcal{L}}(\mathcal{T}_{k,i},\mathcal{C}_{k,i},p_i)$ and select the target proportion $\hat{p}_{k}= \arg\min _{p_i \in \G_g}\hat{\mathcal{L}}(\mathcal{T}_{k,i},\mathcal{C}_{k,i},p_i).$        %\begin{align*}
        %\G_g^* = \bigg\{p_i \in \G_g : \widehat{\mathcal{P}}(\mathcal{Q}_{m,k}^{(2)},p_i) \in \bigg(1-\alpha-\delta, 1-\alpha+\frac{1}{\lceil m_1(1-p_i) \rceil+1}+\delta \bigg)\bigg\}.
        %\end{align*}
    \end{enumerate}
    %\item Let $\hat{p}_k,$ $k=1,\ldots,K$ be the optimal ratios.
    \item Set $\hat{c}_\ell = \frac{1}{K}\sum_{k=1}^{K}\hat{p}_k$
    \item (Optional refinement) If greater precision is required, repeat the Steps 2--5 using a finer grid centered around the current estimate $\hat{c}_\ell,$ and update $\hat{c}_\ell$ accordingly.
\end{enumerate}
\begin{remark}
     In practice, one can choose $\ell$ such that $K = 5$ or $10$ or more based on the sample sizes and model computation time. In this way, we need to train the model only on a dataset of size $\ell p,$ for $p\in \G$, for which we can have $\ell p <<  n_1$. However, it is important to note that, for the algorithm to function correctly, at least one value in $\G_g$ must be less than $p^*$ such that $\alpha>1/(\lceil \ell(1-p^*) \rceil +1)$. %Therefore, \red{[incomplete sentence]}
\end{remark}
\begin{remark}
    To the best of our knowledge, existing empirical practice has largely treated the training-calibration split in split conformal prediction as fixed rather than tunable. A common convention in the literature is to use equal-sized proper training and calibration sets, as adopted in empirical evaluations of split conformal methods with ridge regression, random forests, and neural networks in~\cite{romano2019conformalized}. This convention effectively promotes a 1:1 split as a default choice, despite the lack of theoretical or empirical justification for its optimality in terms of interval length. 
    Related recent work has explicitly targeted the problem of prediction set length, but from a different perspective. In particular, recent work formulates length optimization as a constrained problem that minimizes expected prediction set size subject to coverage constraints, and develops a minimax-based procedure that learns covariate-dependent thresholds to construct shorter sets~\cite{kiyani2024length}. Their framework is implemented on top of diverse predictive models, including linear models and deep neural network architectures such as ResNet and large language model backbones. However, across these experiments, the allocation between training and calibration data is not treated as a parameter to be optimized and is fixed within each setup, irrespective of the underlying model. 
    In contrast, our results indicate that a 1:1 split is not generally optimal. The allocation of samples between training and calibration interacts with the underlying learning algorithm, as different models induce different trade-offs between estimation accuracy and calibration precision. Consequently, the split ratio should be regarded as an algorithm-dependent design parameter, and selecting it adaptively can yield shorter prediction intervals while preserving nominal coverage.
\end{remark}

\section{Experimental results}\label{section:experimetns}
In this section, we evaluate our theoretical results using both synthetic and real-world datasets across three regression settings: linear regression, nonparametric regression, and neural networks. In synthetic datasets, each experiment involves $n+1$ samples, consisting of $n$ samples used for training and calibration, and one sample for testing. For each underlying model, we vary the training sample size $n$ and randomly partition the data into training and calibration sets. To ensure reliability, all experiments are repeated 1,000 times for each training-calibration split ratio. We employ split conformal prediction to construct prediction intervals with a nominal coverage level of $1-\alpha=0.9$ and report the average prediction interval length.

% In this section, evaluate our theoretical results on both synthetic and real-world datasets in three regression settings: linear regression, nonparametric regression, and neural network regression. For each experiment, we consider $n+1$ samples, including $n$ training and calibration samples and $1$ test sample. For each setting, we vary the training sample size $n$ and randomly split the data into a training set and a calibration set. To ensure reliability, we repeat all experiments 1,000 times for each training-calibration split ratio. We apply split conformal prediction to construct prediction intervals at nominal coverage $1-\alpha=0.90$ and report the average prediction interval length.

\subsection{Linear regression} \label{sim: linreg} 
We examine the following linear regression model:
\begin{align}\label{eq:linear_regression}
    y_i = x_i^\top\beta + \epsilon_i, \qquad i=1,\ldots,n,
\end{align}
where $y_i\in\R$ is the response variable, $x_i\in[0,1]^5$ denotes the regressors, $\beta\in\R^5$ represents the regression coefficients, and $\epsilon_i\in\R$ is the error term. 
Using equation \eqref{eq:linear_regression}, we generate synthetic datasets with varying sample sizes $\{100,300,500,800,2000\}$. The regressors $x_i$ are sampled independently from a uniform distribution on $[0,1]^5$. We consider three types of error distributions:
\begin{itemize}
\item Normal distribution:
\begin{align*}
    \epsilon_i \sim \mathcal{N}(0, 1)
\end{align*}
\item Student's t-distribution with 5 degrees of freedom (variance-normalized):
\begin{align*}
    \epsilon_i \sim \frac{t(5)}{\sqrt{5/3}}
\end{align*}
\item Lognormal distribution (standardized to zero mean and unit variance):
\begin{align*}
    \epsilon_i \sim \frac{\text{Lognormal}(0, 1) - \exp(0.5)}{\sqrt{(\exp(1) - 1)\exp(1)}}
\end{align*}
\end{itemize}
We repeat the experiments 1,000 times for each error setting and sample size and report the average prediction interval lengths in the first row of Figure~\ref{fig:comparison}.
As shown in Section~\ref{subsec:linear_regression_theory}, for linear regression models a bias-optimal split requires the training and calibration set sizes to satisfy $n_1 \asymp n_2$. That is, the training set size should be approximately equal to the calibration set size. Equivalently, this corresponds to setting $b=1$ in Equation~\ref{eq:ratio}. Solving the equation for $b=1$ yields $x=\hat{c}_{\ell}$, which is consistent with the numerical results in the first row of Figure~\ref{fig:comparison}. As the total sample size increases, the optimal prediction interval length continues to occur when the training and calibration sets are approximately equal in size.
% \blue{We repeat the experiments for each error setting and sample size 1,000 times and report the average prediction interval lengths in the first row of Figure~\ref{fig:comparison}. As anticipated, for linear regression problems, the minimum prediction interval length is achieved when the sizes of the training and calibration sets are almost equal.
% As we have shown in section~\ref{subsec:linear_regression_theory}, for the linear regression models, the sizes of the training set and the calibration set should satisfy $n_1\asymp n_2$ for a bias-optimal split. That is, the size of the training set should be approximately equal to the size of the calibration set. In other words, the parameter $b=1$ in equation~\ref{eq:ratio}. Solving this equation for $b=1$, we obtain $x=\hat{c}_{\ell}$ which is confirmed by our numerical results in the first row of Figure~\ref{fig:comparison}. As we increase the number of data points, the optimal prediction interval length is obtained when the number of training and calibration sets are almost equal.}

\begin{figure}[htbp]
    \centering
    \includegraphics[width=\linewidth]{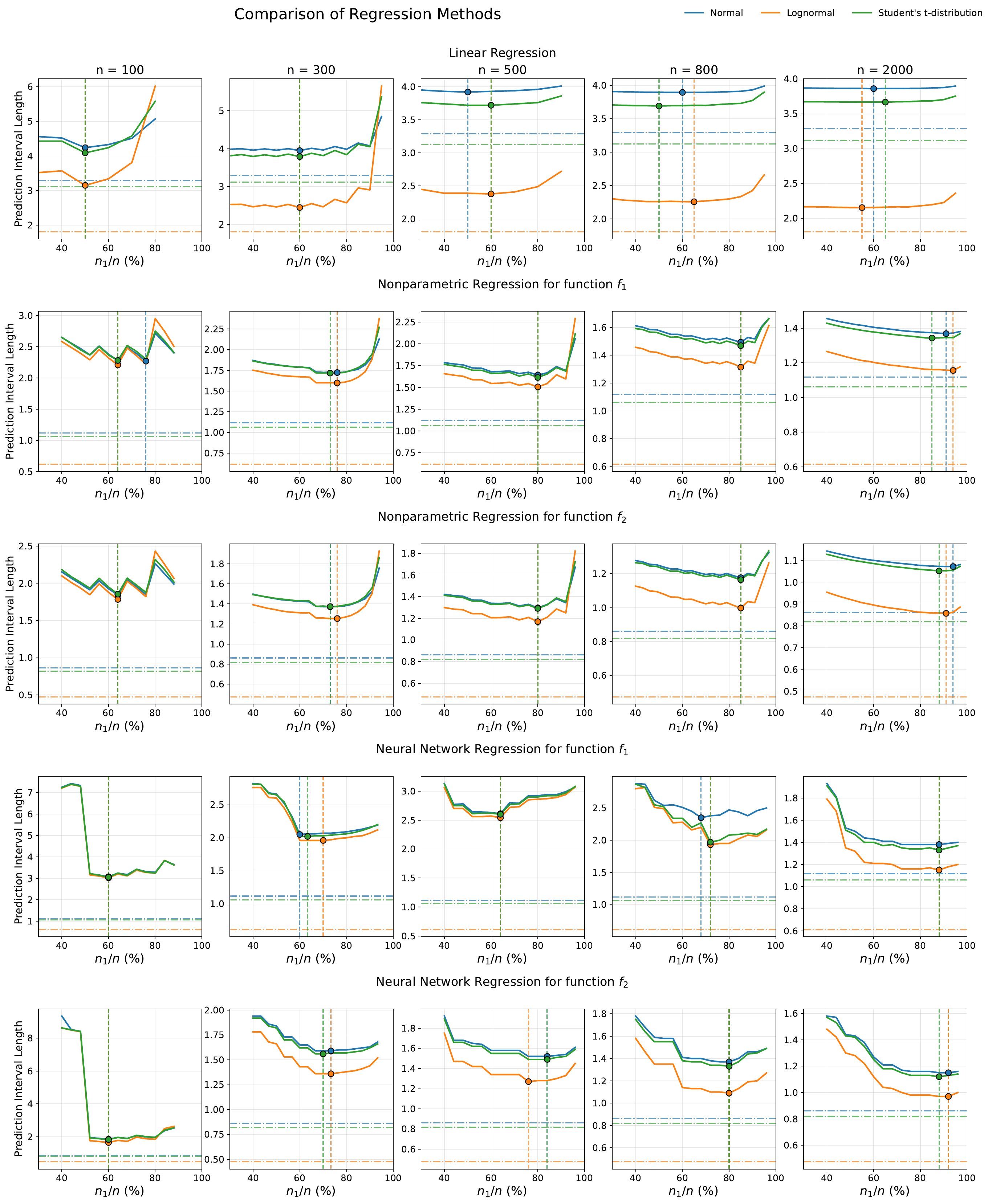}
    \caption{Prediction interval lengths for synthetic datasets as a function of the data split ratio. Solid lines represent the experimental results. Dashed lines indicate the locations of the minimum prediction interval length. Dash-dot lines correspond to the true prediction interval lengths (i.e., the oracle prediction lengths).}
    \label{fig:comparison}
\end{figure}

\subsection{Nonparametric regression} \label{sim: npreg}
In this section, we investigate two settings of a nonparametric regression model of the form 
\begin{align*}
y_{i,j} = f_j(x_i) + \lambda_j\epsilon_i, \quad i = 1, \ldots, n, \quad j=1,2, 
\end{align*}
where $y_{i,j}\in\R$ is the response variable, $f_j$ is the regression function, $x_i=(x_i^{(1)},\ldots,x_i^{(5)})^\top\in\R^5$ is a five-dimensional predictor which is uniformly distributed over the region $[0,1]^5$ independently, $\epsilon_i$ is the independent error term, which is also independent of $x_i$ as defined in Subsection~\ref{sim: linreg}, and $\lambda_1,\lambda_2\ge 0$ are fixed scaling parameters. For reasons of comparability, the scaling parameters $\lambda_j$ are chosen to match $20\%$ of the typical range of variation of $f_j(x)$ under the distribution of $x$. This range is chosen via the interquartile range (IQR) of $f_j(x)$ computed from many simulated draws of $x$, so that the noise level is scaled relative to the signal magnitude. These scaling parameters are set to $\lambda_1=0.340$ and $\lambda_2=0.262$.
 The regression functions under consideration are: 
\begin{itemize}
\item Function $f_1$:
\begin{align*}
    f_1(x) = 3x^{(1)}+\tan (x^{(2)})+(x^{(3)})^3+\log(x^{(4)}+0.1)+\sqrt{x^{(5)}+0.1}
\end{align*}
\item Function $f_2$:
\begin{align*}
    f_2(x) = \exp(\|x\|),
\end{align*}
where $x=(x^{(1)},\ldots,x^{(5)})^\top$.
\end{itemize}
Following the experimental setup in the linear regression setting, we generate synthetic datasets with sample sizes $\{100, 200, 500, 800, 2000\}$. For each sample size, we consider three types of error distributions: normal, lognormal, and Student's t-distribution. Each experiment is repeated 1,000 times to ensure statistical reliability. The average prediction interval lengths computed via split conformal prediction for the two functions are shown in the second and third rows of Figure~\ref{fig:comparison}. 
% As we were expecting, to obtain the minimum prediction interval length, the training sample should be considerably larger than the calibration sample.
From our theoretical analysis of Nadaraya-Watson kernel regression, the bias-optimal data split satisfies $n_1 \asymp n_2^{1+d/4}$, where $d$ denotes the covariate dimension. In our experiments, $d=5$, which yields $n_1 \asymp n_2^{9/4}$. This scaling indicates that nonparametric regression requires a larger training set than calibration set, since accurate nonparametric estimation relies primarily on having more training data.
Under this relationship, the corresponding ratio parameter in Equation~\ref{eq:ratio} is $b=9/4$. Substituting this value into Equation~\ref{eq:ratio} and considering the setting $n=2000$ and $\ell=300$, together with the empirically observed minimum prediction interval length at $\hat{c}_{\ell}\approx 0.75$, yields $x\approx 0.91$. This theoretical value aligns closely with the experimental findings reported in the last columns of the second and third rows of Figure~\ref{fig:comparison}. Overall, these results provide empirical support for the proposed algorithm for selecting the split ratio and validate the practical relevance of Equation~\ref{eq:ratio} in the nonparametric regression setting.
%\ref{fig:comparison}. 

\subsection{Neural network regression}
In this section, we evaluate our theoretical results on neural network regression using synthetic and real-world datasets. 

\subsubsection{Synthetic dataset}
For the synthetic dataset, we again consider the setting in Subsection~\ref{sim: npreg}: 
\begin{align*}%\label{eq:nonparametric_regression} 
y_{i,j} = f_j(x_i) + \lambda_j\epsilon_i, \qquad i = 1, \ldots, n, \quad j=1,2.
\end{align*}
We generate synthetic datasets for sample sizes $\{100, 300, 500, 800, 1000\}$ and consider three types of error distributions: normal, lognormal, and Student's t-distribution. Each configuration is evaluated over 1,000 repetitions. We report the average prediction interval lengths for each setup, with the results displayed in the fourth and fifth rows of Figure \ref{fig:comparison}. In our simulations, the neural network consists of one hidden layer for sample sizes of $\{100, 300\}$ and two hidden layers for sample sizes of $\{500, 800, 2000\}$. The network is trained for a maximum of 100 epochs in all configurations.
From our theoretical analysis of neural networks, a bias-optimal split approximately satisfies $n_1 \asymp n_2^{1+\bar{K}/2\bar{p}}$. In our setting, $\bar{p}=1$ and $\bar{K}=5$ for the functions $f_1$ and $f_2$, which gives $n_1 \asymp n_2^{7/2}$. As introduced in Section~\ref{section:nn_regression}, $(\bar{p},\bar{K})$ denote the smoothness and order parameters in $\mathcal{P}$ selected as the minimizer of $p/K$. Similar to the nonparametric regression case, this scaling indicates that neural networks also require a larger training set than a calibration set.
Under this relationship, the corresponding ratio parameter in Equation~\ref{eq:ratio} is $b=7/2$. Substituting this value into Equation~\ref{eq:ratio} with $n=2000$ and $\ell=100$, and using the empirically observed minimum prediction interval length at $\hat{c}_{\ell}\approx 0.6$, yields $x\approx 0.93$ (similar results are obtained if we consider higher values of $\ell$). This theoretical value closely matches the experimental results reported in the last columns of the fourth and fifth rows of Figure~\ref{fig:comparison}. These findings further support the proposed algorithm for selecting the split ratio.
% As expected, the smallest prediction intervals are obtained when the number of training samples greatly exceeds the number of calibration samples. This aligns with the understanding that larger training sets allow the neural network to better capture underlying patterns in the data, whereas smaller calibration sets ensure the conformal prediction intervals remain accurate. 

\subsubsection{Concrete compressive strength dataset}
To evaluate the empirical performance of the proposed algorithm on a benchmark dataset, we consider the Concrete Compressive Strength dataset~\cite{UCIConcrete2019}. The neural network used in our experiments has two hidden layers and is trained for up to 100 epochs. The dataset is partitioned into training and test sets using an 80:20 split, yielding 824 training and 206 test observations. The training portion is further divided into training and calibration sets for conformal prediction.
To apply Equation~\ref{eq:ratio} and the proposed procedure for selecting a split ratio that minimizes prediction interval length, we set $\bar{p}=1$ and $\bar{K}=8$ (see Section~\ref{section:nn_regression}). Although these parameters can be chosen in a data-dependent way, we only choose them heuristically. Under these choices, $n_1 \asymp n_2^5$, corresponding to $b=5$. To implement Equation~\ref{eq:ratio}, the training data are partitioned into four subsets, giving $\ell=206$ in Equation~\ref{eq:ratio}. Experiments are conducted over a range of split ratios within each subset, and split conformal prediction is applied to the test data. Results are averaged across the four subsets, as shown in the left panel of Figure~\ref{fig:NN_concrete}.
The estimate $\hat{c}_{\ell}=0.66$ is obtained. Substituting these values into Equation~\ref{eq:ratio} yields $x \approx 0.88$. This theoretical value is consistent with the empirical results shown in the right panel of Figure~\ref{fig:NN_concrete}, which are based on the full training dataset. This agreement provides further support for the proposed split-ratio selection procedure.

\begin{figure}[!h]
    \centering
    \includegraphics[width=\linewidth]{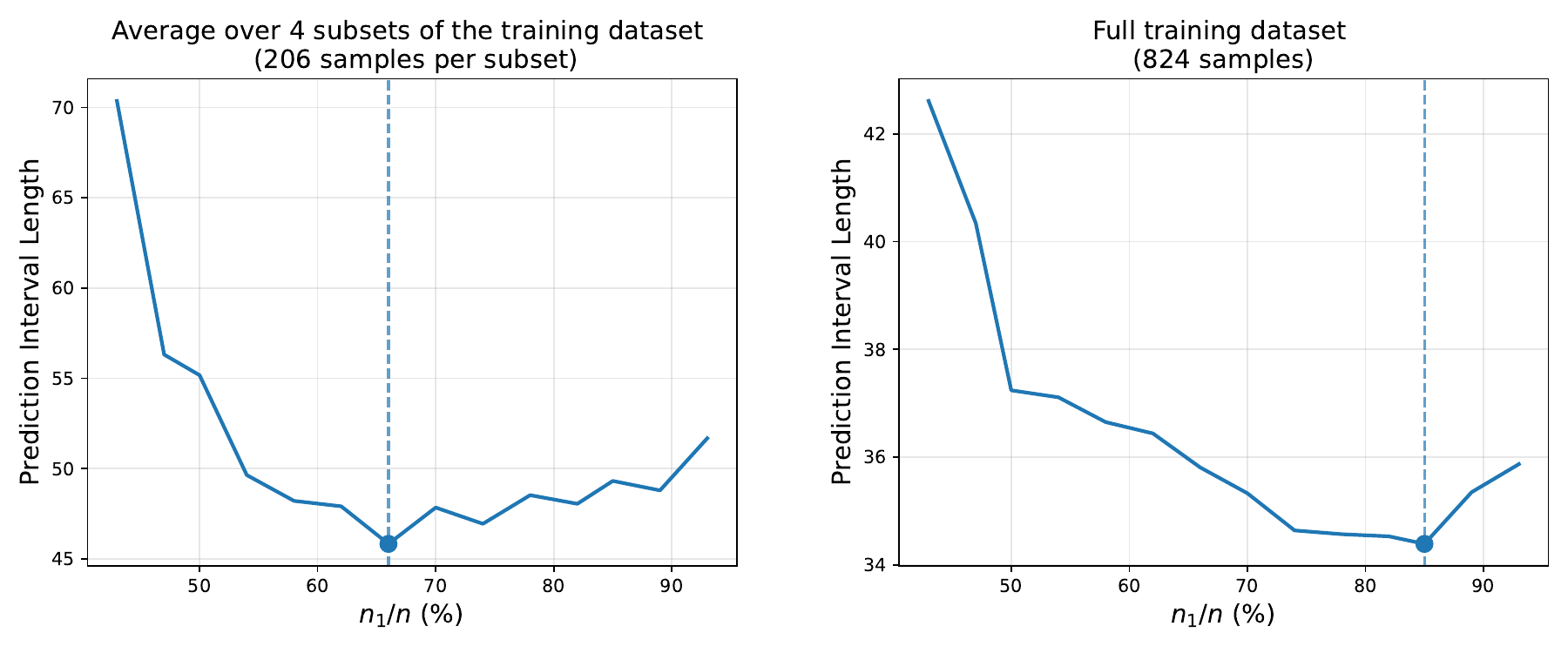}
    \caption{Prediction interval length for different training samples on the concrete compressive strength dataset.}
    \label{fig:NN_concrete}
\end{figure}

\section{Discussion and future directions}\label{section:discussion}
While our experiments are conducted on datasets of moderate size, where one could, in principle, evaluate multiple train-calibration splits to search for shorter prediction intervals, such an approach is often impractical in real-world deployments. In many safety-critical and high-stakes domains, such as healthcare, autonomous systems, and financial risk management, datasets often contain a large number of samples, making model training computationally demanding, time-consuming, and subject to operational or regulatory constraints. Repeated retraining across numerous candidate split ratios is therefore inefficient and, in some cases, infeasible. Additionally, in certain applications, data acquisition itself is costly or limited, further reducing the viability of brute-force split selection. Our method instead provides a principled approach to selecting the data split with the objective of minimizing prediction interval length without requiring repeated retraining on a large dataset. To the best of our knowledge, existing work that aims to reduce conformal prediction interval length typically relies on random or heuristic data splits rather than explicitly optimizing the train-calibration partition. This highlights the practical relevance and novelty of our approach.

\section{Proof of the main results}
First, we present the proof of Theorem~\ref{thm:2} and then move on to prove Theorem~\ref{thm:1}. 
\begin{proof}[Proof of Theorem~\ref{thm:2}]
Fix  $\alpha_2 \in \Big[(\beta_\epsilon+1)/\beta_\epsilon(n_2+1)),1-1/\beta_\epsilon(n_2+1))\Big)$.
Let $\hf$ be the conditional CDF of $\tilde{\epsilon}_i = \epsilon_i-\hat{\gamma}(X_i),$ $i\in\Dt$,  given $\Do$.
Then, 
\begin{align*}
    \E(\qt_u) &= \E ( \E(\qt_u\mid \Do)),\\
    \Var(\qt_u) &= \Var (\E(\qt_u\mid \Do)) + \E (\Var(\qt_u\mid \Do)),
\end{align*}
where $\qt_u$ is the $m_2=\lceil(n_2+1)(1-\alpha_2)\rceil$-th quantile of $\{\tilde{\epsilon}_i\}_{i\in\Dt}$ (same argument follows for $m_1=\lfloor(n_2+1)\alpha_1\rfloor$-th order statistic  of $\{\tilde{\epsilon}_i\}_{i\in\Dt}$).
 Let  $U_1,\ldots,U_{n_2}$ be a set of uniformly distributed random variables on $(0,1)$. Define $U_{(m_2)}$ as the $m_2^{th}$ order statistic of $U_1,\ldots,U_{n_2}$, with mean  
  $\mu_2$ and variance $\sigma_2^2$. 
Recall that, we have $a_{2n_2} = C\sqrt{\log(n_2)/n_2},$ for some constant $C>0$. Then, using Taylor's theorem, we have
\beq \label{eq: exp}
    \begin{split}
         &\E(\qt_u\mid\Do) \\& = \E_1\hf^{-1}(U_{(m_2)}) \\
    & = \E_1\hf^{-1}(U_{(m_2)}) \mathbbm{1}(|U_{(m_2)}-\mu_2|\leq a_{2n_2}) \\&\hspace{3cm}+ \E_1\hf^{-1}(U_{(m_2)}) \mathbbm{1}(|U_{(m_2)}-\mu_2|>a_{2n_2}) \\
    & = \E_1\bigg[ \hf^{-1}(\mu_2) + (U_{(m_2)}-\mu_2){\hf^{-1'}}(\mu_2) \\ &\hspace{3cm}+ \frac{1}{2} (U_{(m_2)}-\mu_2)^2 {\hf^{-1''}}(\mu_2) \bigg] \mathbbm{1}(|U_{(m_2)}-\mu_2|\leq a_{2n_2}) \\
    & \hspace{1cm} + \frac{1}{2} \E_1 (U_{(m_2)}-\mu_2)^2 \left( \hf^{-1''}(\mu_*) - \hf^{-1''}(\mu_2) \right) \mathbbm{1}(|U_{(m_2)}-\mu_2|\leq a_{2n_2}) \\
    & \hspace{1cm} + \E_1\hf^{-1}(U_{(m_2)}) \mathbbm{1}(|U_{(m_2)}-\mu_2|>a_{2n_2}) \\
    & = \hf^{-1}(\mu_2) + 0 + \frac{\sigma_2^2}{2} \hf^{-1''}(\mu_2) + R_{1n},   
    \end{split}
\eeq

where $\mu_*$ is some point between $U_{(m_2)}$ and $\mu_2$ and $R_{1n}$ is the remainder term defined as
\begin{equation*}
    \begin{split}
    R_{1n} & = -\E_1\bigg[ \hf^{-1}(\mu_2) + (U_{(m_2)}-\mu_2){\hf^{-1'}}(\mu_2) \\&\hspace{4cm}+ \frac{1}{2} (U_{(m_2)}-\mu_2)^2 {\hf^{-1''}}(\mu_2) \bigg] \mathbbm{1}(|U_{(m_2)}-\mu_2|>a_{2n_2}) \\
    & \hspace{1cm} + \frac{1}{2} \E_1(U_{(m_2)}-\mu_2)^2 \left( \hf^{-1''}(\mu_*) - \hf^{-1''}(\mu_2) \right) \mathbbm{1}(|U_{(m_2)}-\mu_2|\leq a_{2n_2}) \\
    & \hspace{1cm} + \E_1\hf^{-1}(U_{(m_2)}) \mathbbm{1}(|U_{(m_2)}-\mu_2|>a_{2n_2}).
    \end{split}
\end{equation*} 
Then, we obtain
\beq \label{eq: r1n}
    \begin{split}
    |R_{1n}| %& \leq \hf^{-1}(\mu_\alpha) \Prob(|U_{(m)}-\mu_\alpha|>a_{2n_2}) \\
    % &  \qquad + |\hf^{-1'}(\mu_\alpha)\E|U_{(m)}-\mu_\alpha|\mathbbm{1}(|U_{(m)}-\mu_\alpha|>a_{2n_2}) \\
    % & \qquad + \frac{1}{2}|\hf^{-1''}(\mu_\alpha)|\E|U_{(m)}-\mu_\alpha|^2\mathbbm{1}(|U_{(m)}-\mu_\alpha|>a_{2n_2}) \\
    % & \qquad + \frac{1}{2}\E|U_{(m)}-\mu_\alpha|^2|\hf^{-1''}(\mu_\alpha+(\mu_*-\mu_\alpha)) - \hf^{-1''}(\mu_\alpha)| \mathbbm{1}(|U_{(m)}-\mu_\alpha|\leq a_{2n_2}) \\
    % & \qquad + \E|\hf^{-1}(U_{(m)})| \mathbbm{1}(|U_{(m)}-\mu_\alpha|> a_{2n_2}) \\
    & \leq |\hf^{-1}(\mu_2) |\Prob(|U_{(m_2)}-\mu_2|>a_{2n_2}) \\
    & \qquad + |\hf^{-1'}(\mu_2)|\; \E|U_{(m_2)}-\mu_2|\mathbbm{1}(|U_{(m_2)}-\mu_2|>a_{2n_2}) \\
    & \qquad + %\frac{1}{2}
    |\hf^{-1''}(\mu_2)|\;\E|U_{(m_2)}-\mu_2|^2\mathbbm{1}(|U_{(m_2)}-\mu_2|>a_{2n_2}) \\
    & \qquad + %\frac{\sigma_{\alpha_2}^2}{2}
    \sigma_2^2\;\underset{|x|\leq a_{2n_2}}{\sup}|\hf^{-1''}(\mu_2+x)-\hf^{-1''}(\mu_2)| \\
    & \qquad + \E_1|\hf^{-1}(U_{(m_2)})| \mathbbm{1}(|U_{(m_2)}-\mu_2|> a_{2n_2}).
    \end{split}
\eeq
A standard Hoeffding bound for Beta random variables gives,
\begin{equation*}
    \mathbf{P}(|U_{(m_2)}-\mu_2|> a_{2n_2}) \leq 2\exp(-2n_2a_{2n_2}^2) = o\bigg(\frac{1}{n_2}\bigg),
\end{equation*}
and later we show that,
\begin{equation} \label{eq: eifn}
    \hf^{-1}(\mu_2) = \fe^{-1}(\mu_2) + R_{2n},
\end{equation}
with $E|R_{2n}|=o(1).$ Therefore, by assumptions \ref{A3}, \ref{A1}$_2$ and \ref{A2}, we get that $\E|R_{1n}| = o(1/n_2).$ Next, we show that the quantiles (conditional on $\Do$) of $\tilde\epsilon_i$ are close to the quantiles of $\epsilon_i.$

First, note that for some fixed $a$ and $h,$ by Taylor's theorem
\beq \label{eq: hf}
\begin{split}
    \hf(a+h) & = \E_1\fe (a+\hg(X)+h)\\ &= \fe(a) + \fe'(a)\Big\{\E_1(\hg(X))+h \Big\} \\& \hspace{2cm} + \frac{1}{2}\fe''(a)\Big\{\E_1(\hg^2(X))+2h\E_1(\hg(X))+h^2\Big\} + R_{3n}(a,h),
\end{split}
\eeq
where $R_{3n}(a,h)$ is a remainder term given as
\begin{equation*}
    \begin{split}
    R_{3n}(a,h) & = -\E_1\bigg[ \fe(a) + \Big\{\hg(X)+h\Big\}\fe'(a) \\&\hspace{4cm}+ \frac{1}{2} \Big\{\hg(X)+h\Big\}^2\fe''(a)\bigg] \mathbbm{1}\Big(|\hg(X)+h|> a_{1n_1}\Big)\\
    & \hspace{1cm} + \frac{1}{2} \left( \fe''\Big(a+\theta^*\Big(\hg(X)+h\Big)\Big) - \fe''(a) \right) \\&\hspace{5.4cm}\E_1 \Big\{\hg(X)+h\Big\}^2  \mathbbm{1}\Big(|\hg(X)+h|\leq a_{1n_1}\Big) \\
    & \hspace{1cm} + \E_1\fe \Big(a+\hg(X)+h\Big) \mathbbm{1}\Big(|\hg(X)+h|> a_{1n_1}\Big).
    \end{split}
\end{equation*}
for some $\theta^*\in (0,1)$ and $ a_{1n_1}$ is as defined in \ref{A1}$_2$. Therefore, using H\"older inequality
\begin{equation} \label{eq: r2n}
    \begin{split}
    & |R_{3n}(a,h)|\\ &\leq 2 \mathbf{P}\Big(|\hg(X)+h|> a_{1n_1}\mid \Do\Big) \\&\hspace{0.5cm} + \Big\{ \mathbf{P}\Big(|\hg(X)+h|> a_{1n_1}\mid \Do\Big) \Big\}^{1/2}\bigg[ \fe'(a)\Big\{\E_1\Big(\hg(X)+h\Big)^2\Big\}^{1/2} 
    \\ & \hspace{7.5cm} + \fe''(a) \Big\{\E_1\Big(\hg(X)+h\Big)^4\Big\}^{1/2} \bigg] \\
    & \hspace{2cm} + \sup_{|y|\leq a_{1n_1}} \left| \fe''(a+y) - \fe''(a) \right|  \E_1 \Big\{\hg(X)+h\Big\}^2.
    \end{split}
\end{equation} 
Note that, by Markov's inequality and using assumption \ref{A3}, we can get
\begin{equation} \label{eq: prob_r2n}
    \begin{split}
    \E \mathbf{P}(|\hg(X)-\E_1(\hg(X))|> a_{1n_1}\mid \Do) \leq \frac{C\E\E_1\Big(|\hg(X)|^k\Big)}{ a_{1n_1}^k} = o\bigg(\frac{1}{n_1^{\beta_2}}\bigg).
    \end{split}
\end{equation} 
Then, by choosing $h = -\E_1(\hg(X)) + G_{\epsilon} (a) \Var_1(\hg(X)) ,$ where $G_\epsilon(a) = -\fe''(a)/(2\fe'(a))$ and $a = \fe^{-1}(\mu_2)$ and  we obtain,
\beq \label{eq: ifn}
\begin{split}
    \hf^{-1}(\mu_2) = \fe^{-1}(\mu_2) -\E_1(\hg(X)) + G_\epsilon\Big(\fe^{-1}(\mu_2)\Big) \Var_1(\hg(X)) + R_{4n},
\end{split}
\eeq 
where using \eqref{eq: r2n}, \eqref{eq: prob_r2n} and assumptions \ref{A3}, we get $\E|R_{4n}|= o(1/n_1^{\beta_2}).$ This shows that \eqref{eq: eifn} follows from \eqref{eq: ifn}. Therefore, we have
\beq \label{key1}
\begin{split}
    \E(\qt_u) &= \E\E(\qt_u\mid\Do) \\&= \fe^{-1}(\mu_2) -\E\E_1(\hg(X)) + G_\epsilon\Big(\fe^{-1}(\mu_2)\Big) \E\Var_1(\hg(X)) \\&\hspace{6cm} + \frac{\sigma_2^2}{2} {\fe^{-1}}''(\mu_2)  + o\bigg(\frac{1}{n_1^{\beta_2}}+\frac{1}{n_2}\bigg).
\end{split}
\eeq
Also, by using \ref{A3} and \ref{A1}$_4,$ from \eqref{eq: r2n} and \eqref{eq: prob_r2n}, we get $\E|R_{4n}|=o(n_1^{-\beta_4})$ and consequently we get
\beq
\begin{split} \label{eq: vare}
    \Var\E(\qt_u\mid\Do) &= \Var\E_1(\hg(X)) +  \Big\{G_\epsilon\Big(\fe^{-1}(\mu_2)\Big)\Big\}^2 \Var\Var_1(\hg(X)) \\& \hspace{0.5cm} - 2G_\epsilon\Big(\fe^{-1}(\mu_2)\Big) \Cov\Big( \E_1(\hg(X)), \Var_1(\hg(X)) \Big) + o\bigg(\frac{1}{n_1^{\beta_4}}  +\frac{1}{n_2}\bigg).
\end{split}
\eeq
Next, we calculate $\E\Var(\qt_u\mid\Do)$. Using Taylor's theorem, we have
\beq \label{eq: cvar}
    \begin{split}
    &\Var(\qt_u\mid\Do)\\ 
    & = \Var_1\hf^{-1}(U_{(m_2)}) \\
    & = \Var_1\bigg[ \bigg\{ \hf^{-1}(\mu_2) + (U_{(m_2)}-\mu_2){\hf^{-1'}}(\mu_2) \bigg\} \\&\hspace{1.5cm} - \bigg\{ \hf^{-1}(\mu_2) + (U_{(m_2)}-\mu_2){\hf^{-1'}}(\mu_2) \bigg\} \mathbbm{1}(|U_{(m_2)}-\mu_2|> a_{2n_2}) \\& \hspace{1.5cm} + (U_{(m_2)}-\mu_2)({\hf^{-1'}}(\mu_*)-{\hf^{-1'}}(\mu_2)) \mathbbm{1}(|U_{(m_2)}-\mu_2| \leq a_{2n_2}) \bigg] \\
    & \hspace{1cm} + \Var_1\hf^{-1}(U_{(m_2)}) \mathbbm{1}(|U_{(m_2)}-\mu_2|>a_{2n_2}) \\
    & \hspace{1cm} - \E_1\bigg[\hf^{-1}(U_{(m_2)}) \mathbbm{1}(|U_{(m_2)}-\mu_2|\leq a_{2n_2})\bigg] \\&\hspace{5cm} \E_1\bigg[\hf^{-1}(U_{(m_2)}) \mathbbm{1}(|U_{(m_2)}-\mu_2|>a_{2n_2})\bigg] \\
    & = \sigma_2^2 \Big\{ \hf^{-1'}(\mu_2)\Big\}^2 + R_{5n},   
    \end{split}
\eeq
where $R_{5n}$ is the remainder term such that, following similar calculations as in \eqref{eq: r1n}, under assumptions \ref{A1}$_4$ and \ref{A2}, $\E|R_{5n}|=o(1/n_2).$
Therefore,
\beq \label{eq: var}
\begin{split}
    \E\Var(\qt_u\mid\Do)
    &= \E\Var_1(\hf^{-1}(\mu_2))\\
    &= \sigma_2^2 \E\Big\{ \hf^{-1'}(\mu_2)\Big\}^2 + o\bigg(\frac{1}{n_2}\bigg)\\
    &= \sigma_2^2 \Big\{{\fe^{-1}}'(\mu_2)\Big\}^2 + o\bigg(\frac{1}{n_1^{\beta_4}}+\frac{1}{n_2}\bigg).
\end{split}
\eeq

Finally, we get 
\beq \label{key2}
\begin{split}
    \Var(\qt_u) & = \Var\E_1(\hg(X)) + \Big\{G_\epsilon\Big(\fe^{-1}(\mu_2)\Big)\Big\}^2 \Var\Var_1(\hg(X)) \\&\hspace{2cm} - 2G_\epsilon\Big(\fe^{-1}(\mu_2)\Big) \Cov\Big( \E_1(\hg(X)), \Var_1(\hg(X)) \Big) \\& \hspace{6cm} +\sigma_2^2 \Big{\{\fe^{-1}}'(\mu_2)\Big\}^2 + o\bigg(\frac{1}{n_1^{\beta_4}}+\frac{1}{n_2}\bigg).
\end{split}
\eeq

For the covariance, note that if $\alpha_1<(1-\alpha_2),$ then we have
\begin{equation*}
     \Cov(\qt_l,\qt_u) = \Cov \Big(\E(\qto\mid \Do), \E(\qtt\mid \Do)\Big) + \E \Cov(\qto, \qtt \mid \Do),
\end{equation*}
where $\qt_l$ and $\qtt$ are the $m_{1}=\lfloor(n_2+1)\alpha_1\rfloor$ and $m_2=\lceil(n_2+1)(1-\alpha_2)\rceil$-th quantiles of $\{\tilde{\epsilon}_i\}_{i\in\Dt},$ as previously defined and $U_{(m_{1})},$ $U_{(m_{2})}$ are the $m_{1}^{th}$ and $m_{2}^{th}$ order statistic of $U_1,\ldots,U_{n_2}\overset{iid}{\sim}Unif(0,1),$ with means $\mu_1=m_1/(n_2+1)$ and $\mu_2=m_2/(n_2+1),$ and covariance $\rho_{12} = m_1(n_2+1-m_2)/((n_2+1)^2(n_2+2)).$ Then, following \eqref{eq: exp} and \eqref{eq: ifn}, we have
\beqs
\begin{split}
    & \Cov \Big(\E(\qto\mid \Do), \E(\qtt\mid \Do)\Big)\\ &= \Var\E_1(\hg(X)) +  G_\epsilon\Big(\fe^{-1}(\mu_1)\Big) G_\epsilon\Big(\fe^{-1}(\mu_2)\Big) \Var\Var_1(\hg(X)) \\&\hspace{1cm}  - \bigg\{G_\epsilon\Big(\fe^{-1}(\mu_1)\Big) + G_\epsilon\Big(\fe^{-1}(\mu_2)\Big) \bigg\}\Cov\Big( \E_1(\hg(X)), \Var_1(\hg(X)) \Big) \\&\hspace{2cm} + o\bigg(\frac{1}{n_1^{\beta_4}} +\frac{1}{n_2}\bigg).
\end{split}
\eeqs 
Furthermore, following similar calculations as in \eqref{eq: cvar} and \eqref{eq: var}, it is straightforward to show that
\beqs
\begin{split}
    \E \Cov(\qto, \qtt \mid \Do)
    &= \E\Cov_1(\hf^{-1}(\mu_1),\hf^{-1}(\mu_2))\\
    &= \E\Big( \hf^{-1'}(\mu_1) \hf^{-1'}(\mu_2)\; \Cov_1(U_{(m_1)},U_{(m_2)}) \Big)  + o\bigg(\frac{1}{n_2}\bigg)\\
    &= \rho_{12}\; {\fe^{-1}}'(\mu_1) {\fe^{-1}}'(\mu_2) + o\bigg(\frac{1}{n_1^{\beta_4}}+\frac{1}{n_2}\bigg).
\end{split}
\eeqs
Then we get,
\beq \label{key3}
\begin{split}
    &  \Cov(\qto,\qtt) \\ &= \Var\E_1(\hg(X)) +  G_\epsilon\Big(\fe^{-1}(\mu_1)\Big) G_\epsilon\Big(\fe^{-1}(\mu_2)\Big) \Var\Var_1(\hg(X)) \\&\hspace{1cm}  - \bigg\{G_\epsilon\Big(\fe^{-1}(\mu_1)\Big) + G_\epsilon\Big(\fe^{-1}(\mu_2)\Big) \bigg\}\Cov\Big( \E_1(\hg(X)), \Var_1(\hg(X)) \Big) \\&\hspace{2cm} + \rho_{12} {\fe^{-1}}'(\mu_1) {\fe^{-1}}'(\mu_2) + o\bigg(\frac{1}{n_1^{\beta_4}} +\frac{1}{n_2}\bigg).
\end{split}
\eeq

Note that, $\Lt = \tqto - \tqtt,$ where $\tqto \equiv \qt_{n_2, 1-\alpha_U}$ and $\tqtt \equiv \qt_{n_2, \alpha_L}$ such that $\alpha_L<(1-\alpha_U).$ Therefore,
\beqs
\begin{split}
    \E(\Lt) &= \E(\tqto) - \E(\tqtt),\\
    \Var(\Lt) &= \Var(\tqto) + \Var(\tqtt) - 2\Cov(\tqto,\tqtt).
\end{split}
\eeqs
Then the result follows from \eqref{key1}, \eqref{key2} and \eqref{key3} by considering $\alpha_1=\alpha_L$ and $\alpha_2 = \alpha_U.$
\end{proof}

\begin{proof}[Proof of Theorem~\ref{thm:1}]
Proof of this theorem closely follows the proof of Theorem~\ref{thm:2}.
For $\alpha \in \Big[(\beta_\epsilon+1)/(\beta_\epsilon(n_2+1)),1-1/(\beta_\epsilon(n_2+1))\Big),$ we have 
\begin{align*}
    \E(\qa) &= \E\E(\qa\mid \Do),\\
    \Var(\qa) &= \Var \E(\qa\mid \Do) + \E \Var(\qa\mid \Do).
\end{align*}
Let $\tf$ be the CDF of $|\hat{\epsilon}_i| = |\epsilon_i-\hat{\gamma}(X_i)|,$ $i=n_1+1,\dots,n,$ given $\Do$. Assume $U_1,\ldots,U_{n_2}$ be a set of uniformly distributed random variables on $(0,1)$. Define, $U_{(m)}$ as the $m=\lceil (n_2+1)(1-\alpha) \rceil$-th order statistic of $U_1,\ldots,U_{n_2}\overset{iid}{\sim}Unif(0,1),$ with mean $\mu_\alpha$ and variance $\sigma_\alpha^2.$

Recall that, we have $a_{2n_2} = C\sqrt{\log(n_2)/n_2},$ for some constant $C>0$. Then, following similar calculations as in \eqref{eq: exp}, and \eqref{eq: r1n}, we have
\beq
    \begin{split}
         \E(\qa\mid\Do) = \E_1\tf^{-1}(U_{(m)}) 
     = \tf^{-1}(\mu_\alpha) + 0 + \frac{\sigma_\alpha^2}{2} \tf^{-1''}(\mu_\alpha) + R_{6n},   
    \end{split}
\eeq
where
\begin{equation*}
    \begin{split}
    R_{6n} & = -\E_1\bigg[ \tf^{-1}(\mu_\alpha) + (U_{(m)}-\mu_\alpha){\tf^{-1'}}(\mu_\alpha) \\&\hspace{4cm}+ \frac{1}{2} (U_{(m)}-\mu_\alpha)^2 {\tf^{-1''}}(\mu_\alpha) \bigg] \mathbbm{1}(|U_{(m)}-\mu_\alpha|>a_{2n_2}) \\
    & \hspace{1cm} + \frac{1}{2} \E_1(U_{(m)}-\mu_\alpha)^2 \left( \tf^{-1''}(\mu_*) - \tf^{-1''}(\mu_\alpha) \right) \mathbbm{1}(|U_{(m)}-\mu_\alpha|\leq a_{2n_2}) \\
    & \hspace{1cm} + \E_1\tf^{-1}(U_{(m)}) \mathbbm{1}(|U_{(m)}-\mu_\alpha|>a_{2n_2}),
    \end{split}
\end{equation*} 
where $\mu_*$ is some point between $U_{(m)}$ and $\mu_\alpha$ and we have by assumptions \ref{A1}$_1$ and \ref{A2}, $\E|R_{6n}|=o(1/n_2).$ Next, we show that the quantiles (conditional on $\Do$) of $|\hat\epsilon_i|$ are close to the quantiles of $|\epsilon_i|.$ Note that for some fixed $a$ ad $h,$ by Taylor's theorem
\beqs
\begin{split}
    \tf(a+h) 
    &= \hf(a+h) - \hf(-a-h)\\
    & = \fme(a) + h \fme'(a) + \E_1(\hg(X)) \{\fe'(a) - \fe'(-a)\} + R_{7n}(a,h),
\end{split}
\eeqs
where $\hf$ is the CDF of $\hat\epsilon_i$ conditional on $\Do$ and $R_{7n}(a,h)$ is a remainder term which can be bounded by following similar derivations in \eqref{eq: r2n}, \eqref{eq: prob_r2n} and using assumption \ref{A1}$_1.$
Therefore, by choosing $h = H_{|\epsilon|}(a) \E_1(\hg(X)) ,$ where $H_{|\epsilon|}(a) = -\{\fe'(a) - \fe'(-a)\}/\fme'(a)$ and $a = \fme^{-1}(\mu_\alpha),$ we obtain,
\beqs
\begin{split}
    \tf^{-1}(\mu_\alpha) = \fme^{-1}(\mu_\alpha) + H_{|\epsilon|}\Big(\fme^{-1}(\mu_\alpha)\Big) \E_1(\hg(X)) + R_{9n},
\end{split}
\eeqs
where $\E|R_{9n}|=o(n_1^{-\beta_1}).$ Therefore, we have
\beqs
\begin{split}
    \E(\qa) &= \E\E(\qa\mid\Do)\\ &= \fme^{-1}(\mu_\alpha) +   H_{|\epsilon|}\Big(\fme^{-1}(\mu_\alpha)\Big) \E\E_1(\hg(X)) + \frac{\sigma_\alpha^2}{2} {\fme^{-1}}''(\mu_\alpha) + o\bigg( \frac{1}{n_1^{\beta_1}}+\frac{1}{n_2}\bigg).
\end{split}
\eeqs
and by similar argument as in \eqref{eq: vare}, \eqref{eq: cvar} and \eqref{eq: var}, we get
\beqs
\begin{split}
    \Var(\qa) = \Big\{ H_{|\epsilon|}\Big(\fme^{-1}(\mu_\alpha)\Big) \Big\}^2 \Var\E_1(\hg(X)) +\sigma_\alpha^2 \Big{\{\fme^{-1}}'(\mu_\alpha)\Big\}^2 + o\bigg(\frac{1}{n_1^{\beta_2}}+\frac{1}{n_2}\bigg).
\end{split}
\eeqs

Note that, if the distribution of $\epsilon$ is symmetric, then we would have $H_{|\epsilon|}(a) = 0,$ for all $a$ and the new correction factor $h$ would be $h = G_{\epsilon}(a) \E_1(\hg^2(X))$ where $G_{\epsilon}(a) = -\fe''(a)/(2\fe'(a)),$ and we would get

\beqs
\begin{split}
    \E(\qa) &= \fme^{-1}(\mu_\alpha) +  G_{\epsilon}\Big(\fme^{-1}(\mu_\alpha)\Big) \E\E_1(\hg^2(X)) + \frac{\sigma_\alpha^2}{2} {\fme^{-1}}''(\mu_\alpha) + o\bigg( \frac{1}{n_1^{\beta_2}}+\frac{1}{n_2}\bigg).\\
    \Var(\qa) &= \Big\{ G_{\epsilon}\Big(\fme^{-1}(\mu_\alpha)\Big) \Big\}^2 \Var\E_1(\hg^2(X)) +\sigma_\alpha^2 \Big{\{\fme^{-1}}'(\mu_\alpha)\Big\}^2 + o\bigg(\frac{1}{n_1^{\beta_4}}+\frac{1}{n_2}\bigg).
\end{split}
\eeqs

Finally, it follows that $\E(\La) = 2\E(\qa)$ and $\Var(\La) = 4\Var(\qa)$.
\end{proof}

\bibliographystyle{plainnat}
\bibliography{ref}

\end{document}